\documentclass[12pt]{amsart}
\usepackage[foot]{amsaddr}
\usepackage[mathscr]{eucal}
\usepackage{geometry}
\usepackage{amsmath,amsfonts,amssymb,amsthm,url}
\usepackage{MnSymbol}
\usepackage{verbatim} 
\usepackage{hyperref}
\usepackage[english]{babel}
\usepackage{graphicx}
\usepackage[enableskew]{youngtab}
\newtheorem{thm}{Theorem}[section]
\newtheorem{conj}[thm]{Conjecture}
\newtheorem{cor}[thm]{Corollary}
\newtheorem{lem}[thm]{Lemma}
\newtheorem{prop}[thm]{Proposition}

\newenvironment{customcor}[1]
  {\innercustomcor}
  {\endinnercustomcor}
\makeatletter

\newtheorem*{repeat@theorem}{\repeat@title}
\newcommand{\newrepeattheorem}[2]{%
\newenvironment{repeat#1}[1]{%
 \def\repeat@title{#2 \ref{##1}}%
 \begin{repeat@theorem}}%
 {\end{repeat@theorem}}}
\makeatother

\newrepeattheorem{conj}{Conjecture}
\theoremstyle{definition}
\makeatletter
\newtheorem*{rep@theorem}{\rep@title \ continued}
\newcommand{\newreptheorem}[2]{%
\newenvironment{rep#1}[1]{%
 \def\rep@title{#2 \ref{##1}}%
 \begin{rep@theorem}}%
 {\end{rep@theorem}}}
\makeatother

\newreptheorem{example}{Example}
\theoremstyle{definition}

\newtheorem{defn}[thm]{Definition}
\newtheorem{example}[thm]{Example}
\newtheorem{remark}[thm]{Remark}
\newtheorem{non-example}[thm]{Non-Example}
\newtheorem{problem}{Problem}

\def\a{\alpha}

 \title{Support Equalities Among  Ribbon Schur Functions}
 
 \date{}
 
 \author{Marisa Gaetz$^1$}
 \address{$^1$Massachusetts Institute of Technology}
\email{mgaetz@mit.edu}

 \author{Will Hardt$^2$} 
 \address{$^2$University of Wisconsin -- Madison}
\email{whardt@wisc.edu}

 \author{Shruthi Sridhar$^3$} 
 \address{$^3$Princeton University}
 \email{ssridhar@math.princeton.edu}
 
\begin{document}
\begin{abstract}
In 2007, McNamara proved that two skew shapes can have the same Schur support only if they have the same number of $k\times \ell$ rectangles as subdiagrams. This implies that two ribbons can have the same Schur support only if one is obtained by permuting row lengths of the other. We present substantial progress towards classifying when a permutation $\pi \in S_m$ of row lengths of a ribbon $\alpha$ produces a ribbon $\alpha_{\pi}$ with the same Schur support as $\alpha$; when this occurs for all $\pi \in S_m$, we say that $\alpha$ has \emph{full equivalence class}. Our main results include a sufficient condition for a ribbon $\alpha$ to have full equivalence class. Additionally, we prove a separate necessary condition, which we conjecture to be sufficient.  
\end{abstract}

\maketitle

\noindent \textit{Keywords: Schur functions, Schur support, ribbons, skew shapes, $R$-matrices}

\section{Introduction}

The question of when two skew diagrams yield equal skew Schur functions has been studied in detail; for instance, see \cite{Billera}, \cite{McWilligenburg}, and \cite{RSvW}. However, the related question of when two skew diagrams have the same Schur support (see Definition \ref{def:ssupport}) has received less attention, with the most substantial progress occurring in \cite{McNamara} (2007) and in \cite{McWilligenburgSupport} (2011). 

In \cite{McNamara}, P. R. W. McNamara proves that any two skew diagrams with the same Schur support necessarily contain the same number of $k \times \ell$ rectangles, for every $k, \; \ell \geq 1$. In \cite{McWilligenburgSupport}, P. R. W. McNamara and S. van Willigenburg explicitly determine the Schur support for a special class of skew shapes called \emph{equitable ribbons}.

In this paper, we expand on the results presented in \cite{McNamara} and \cite{McWilligenburgSupport} by working to classify which ribbons have the same Schur support under all permutations of their row lengths; we say these ribbons have \emph{full equivalence class} (Definition \ref{fullec}). Note that in the work that follows, we always assume that each row of a ribbon is at least two boxes in length and that each ribbon has at least three rows. This is because in the cases where one of these conditions does not hold, it is fairly easy and uninteresting to classify when a ribbon has full equivalence class. 

In the next section, we provide preliminary information to aid the understanding of the rest of the paper. In Section \ref{sec:sufficient} (resp. Section \ref{sec:necessary}), we provide a sufficient (resp. necessary) condition for a ribbon to have full equivalence class. Our sufficient condition is a generalization of \cite[Thm 1.5]{McWilligenburgSupport}, in which McNamara and van Willigenburg show that all equitable ribbons have full equivalence class. Finally, we conjecture that the necessary condition from Section \ref{sec:necessary} is in fact sufficient.

\section{Preliminaries}

We begin by establishing some basic definitions relating to Schur functions and ribbons. We will then introduce the Littlewood-Richardson rule, which is a central ingredient in our proofs. Finally, we will introduce an algorithm given by $R$-matrices, which will dictate a way to swap adjacent row lengths in a ribbon tableau, while preserving the content of the filling and semistandardness within the two swapped rows. This algorithm is an important part of our proof of a sufficient condition for full equivalence class (Corollary \ref{suff-real}).

\subsection{Schur Functions}

The \textit{Young diagram} corresponding to a partition $\lambda = (\lambda_1, \lambda_2, \ldots, \allowbreak \lambda_m)$ of an integer $n$ is a collection of $n$ boxes arranged in left-aligned rows, where the $i^{th}$ row from the top has $\lambda_i$ boxes. A filling of a Young diagram with integers is called \textit{semistandard} if the integers increase weakly across rows and strictly down columns. Such a filled-in Young diagram is called a \textit{semistandard Young tableau (SSYT)} (Figure \ref{young-tab-figure}). 

\begin{figure} 
\begin{center}
\young(11111224,2233455,345)
\end{center}
\caption{A semistandard Young tableau.}
\label{young-tab-figure}
\end{figure}

We use \textit{weight} or \textit{content} to refer to the multi-set of integers in the filling of a tableau. The weight or content is denoted as a tuple $\mu = (\mu_1, \mu_2, \ldots, \mu_k)$, where $\mu_i$ is the number of $i$'s in the filling of the tableau. For example, the content of the tableau from Figure \ref{young-tab-figure} is $\mu = (5,4,3,3,3)$. 

\emph{Schur functions} are often considered to be the most important basis for the ring of symmetric functions. Schur functions are indexed by integer partitions, where the Schur function $s_{\lambda}$ corresponding to a partition $\lambda$ is defined as

\begin{equation} \label{eq:SchurFunc}
s_{\lambda}(x_1, x_2, x_3, \ldots) = \sum_{\substack{T~:~\text{SSYT of}\\ \text{shape } \lambda}} x^T = \sum_{\substack{T~:~\text{SSYT of}\\ \text{shape } \lambda}} x_1^{t_1} x_2^{t_2} x_3^{t_3} \cdots 
\end{equation}
where $t_i$ is the number of occurrences of $i$ in $T$.

We can generalize this notion of Schur functions to apply to skew shapes, which are obtained by removing the Young diagram corresponding to the partition $\mu$ from the top-left corner of a larger Young diagram corresponding to the  partition $\lambda$. Here, we require that the diagram for $\mu$ is contained in the diagram for $\lambda$, and we write the resulting skew shape as $\lambda / \mu$. When $\mu$ is the empty partition, we call $\lambda / \mu$ ``straight." Skew Schur functions have an analogous definition to that of straight Schur functions, where the sum in Equation \ref{eq:SchurFunc} is instead over skew semistandard Young tableaux of shape $\lambda / \mu$.

Skew Schur functions have the nice property that they are Schur-positive, meaning that for any skew shape $\lambda / \mu$, we can write
$$s_{\lambda / \mu} = \sum_{\nu} c_{\mu, \nu}^{\lambda} s_{\nu}$$
where $\nu$ denotes a straight partition, and where all coefficients $c_{\mu, \nu}^{\lambda} \geq 0$. The coefficients $c_{\mu, \nu}^{\lambda}$ are called \emph{Littlewood-Richardson coefficients}, and will play an important role in the Littlewood-Richardson rule (which we introduce in Theorem \ref{LR-rule}). Since such a decomposition of a skew Schur function into a linear combination of straight Schur functions is unique, the following definition is well-defined:


\begin{defn} \label{def:ssupport}
The \emph{Schur support} of a skew shape $\lambda / \mu$, denoted $[\lambda / \mu]$, is defined as
$$[\lambda / \mu] = \{\nu: \; c_{\mu, \nu}^{\lambda} > 0\}.$$
\end{defn}
In other words, the support of a skew shape $\lambda/\mu$ is the set of straight shapes $\nu$ such that $s_{\nu}$ appears with nonzero coefficient in the expansion of $s_{\lambda / \mu}$ into a linear combination of straight Schur functions.

\begin{remark} \label{rmk:antipodal}
It is well known \cite[Exer. 7.56(a)]{Stanley} that $[\a^{\circ}] = [\a]$, where $\a^{\circ}$ is the antipodal ($180^{\circ}$) rotation of a ribbon $\a$.
\end{remark}

\subsection{Ribbons}

A \textit{ribbon} is a connected skew shape which does not contain a $2 \times 2$ block as a subdiagram. Any composition $\alpha$ of an integer $n$ determines a unique ribbon. We will use the notation $\alpha = (\alpha_1, \alpha_2, \ldots, \alpha_m)$ to denote a ribbon with $m$ rows, where row 1 is at the top of the ribbon, row $i$ has length $\alpha_i$, and $\a_i \geq 2$ for all $1 \leq i \leq m$.

\begin{defn}
Let $\a = (\a_1, \a_2, \ldots, \a_m)$ and $\a_{\pi} = (\a_{\pi^{-1}(1)}, \a_{\pi^{-1}(2)}, \ldots, \a_{\pi^{-1}(m)})$ be ribbons, where $\pi \in S_m$. We say $\a_{\pi}$ is a \textit{permutation} of $\a$.  
\end{defn}

\begin{example} \label{ex:perms}
Figure \ref{ex:perms-figure} depicts all permutations of the ribbon $\a = (4,3,2)$, where we have written the permutations in cycle notation.
\begin{figure}
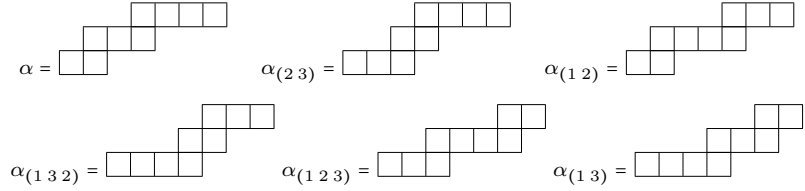

\begin{center}
\tiny$\a =$ \young(:::\;\;\;\;,:\;\;\;,\;\;) \hspace{.25cm} $\a_{(2 \; 3)} = $ \young(:::\;\;\;\;,::\;\;,\;\;\;) \hspace{.25cm} $\a_{(1 \; 2)} = $ \young(::::\;\;\;,:\;\;\;\;,\;\;) \\
\vspace{.25cm}
$\a_{(1 \; 3 \; 2)} =$ \young(::::\;\;\;,:::\;\;,\;\;\;\;) \hspace{-.1cm} $\a_{(1 \; 2 \; 3)} = $ \young(:::::\;\;,::\;\;\;\;,\;\;\;) \hspace{-.1cm} $\a_{(1 \; 3)} = $ \young(:::::\;\;,:::\;\;\;,\;\;\;\;)
\end{center}
\caption{Permutations of the ribbon $\a = (4,3,2)$.}
\label{ex:perms-figure}
\end{figure}
\end{example}
\vspace{.25cm}

In \cite{McNamara}, McNamara proves that two skew shapes can have the same Schur support only if they contain the same number of $k \times \ell$ rectangles as subdiagrams, for all $k, \ell \geq 1$. This result has the following implication for ribbons:

\begin{prop} \label{prop: McNamara}
Let $\alpha$ and $\beta$ be ribbons such that $[\alpha] = [\beta]$. Then $\beta = \alpha_\pi$ for some permutation.
\end{prop}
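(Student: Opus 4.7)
The plan is to reduce Proposition~\ref{prop: McNamara} to McNamara's theorem on $k\times\ell$ subdiagrams (the result cited just before the proposition). Since $[\alpha] = [\beta]$, McNamara's theorem gives that for every $k, \ell \geq 1$, the ribbons $\alpha$ and $\beta$ contain the same number of $k\times\ell$ rectangular subdiagrams. I would then exploit how strongly this constrains a ribbon: by definition a ribbon contains no $2\times 2$ block, so any nonzero count requires $\min(k,\ell)=1$; moreover, under our standing convention that each row has length at least $2$, any two consecutive rows of a ribbon share exactly one column, and every column contains at most two boxes, so the $k\times 1$ count vanishes for $k\geq 3$. Hence the only potentially nonzero counts are for rectangles of shape $1\times \ell$ (for $\ell \geq 1$) and $2\times 1$.

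Next, I would extract the multiset of row lengths from the $1\times\ell$ counts. Because any $1\times \ell$ subrectangle of a ribbon must lie inside a single row, the number $f_\ell(\alpha)$ of $1\times\ell$ subdiagrams of $\alpha = (\alpha_1, \dots, \alpha_m)$ equals
\[
f_\ell(\alpha) \;=\; \sum_{i\,:\,\alpha_i \geq \ell} (\alpha_i - \ell + 1).
\]
Setting $r_\ell(\alpha) := \#\{i : \alpha_i = \ell\}$, a straightforward second-difference computation gives
\[
r_\ell(\alpha) \;=\; f_\ell(\alpha) - 2 f_{\ell+1}(\alpha) + f_{\ell+2}(\alpha).
\]
Applying the same formula to $\beta$ and using $f_\ell(\alpha) = f_\ell(\beta)$ for all $\ell$ yields $r_\ell(\alpha) = r_\ell(\beta)$ for every $\ell \geq 1$. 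Thus $\alpha$ and $\beta$ have the same multiset of row lengths, so $\beta$ must equal $\alpha_\pi$ for some $\pi \in S_m$, as desired.

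There is no real obstacle here: the content of the proposition is entirely in McNamara's theorem, and the remaining work is a short bookkeeping argument using that ribbons are essentially one-dimensional objects. The only point requiring a bit of care is to confirm that the row-length conventions ($\alpha_i \geq 2$) and the ribbon condition (no $2\times 2$) together ensure that horizontal $1\times\ell$ blocks are confined to single rows, so that $f_\ell(\alpha)$ has the clean form above; once this is noted, inverting the linear system via second differences is immediate.
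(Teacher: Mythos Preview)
Your proof is correct and follows essentially the same approach as the paper: both invoke McNamara's rectangle-count theorem and then extract the multiset of row lengths from the $1\times\ell$ counts. The only cosmetic difference is that the paper argues by sorting the row lengths and taking the minimal index where they disagree (obtaining a contradiction from the $1\times\alpha_i$ count), whereas you invert the linear system explicitly via second differences; these are equivalent pieces of bookkeeping.
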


\begin{proof}
By \cite{McNamara}, $\a$ and $\beta$ contain the same number of $2 \times 1$ rectangles as subdiagrams, and therefore they contain the same number of rows --- let's say $m$ rows. Label the row lengths of $\a$, indexing so that the row lengths weakly decrease as the index increases (i.e. $\a_1 \geq \a_2 \geq \cdots \geq \a_m$). Label the row lengths of $\beta$ in the same way. It suffices to show that $\alpha_i = \beta_i$ for all $1 \leq i \leq m$.

Suppose for the sake of contradiction that there exists an $i \in \{1,2,\ldots,m \}$ for which $\alpha_i \neq \beta_i$. Choose the minimal such $i$ and assume, without loss of generality, that $\alpha_i > \beta_i$. It follows that $\alpha$ has more $1 \times \alpha_i$ rectangles than $\beta$ does, contradicting McNamara's necessary condition for Schur support equality. Therefore, $\alpha_i = \beta_i$ for all $1 \leq i \leq m$, completing the proof. 
\end{proof}

\begin{defn} \label{fullec}
We say that a ribbon $\a=(\a_1, \a_2, \ldots, \a_m)$ has \emph{full equivalence class} if $[\a] = [\a_\pi]$ for all permutations $\pi \in S_m$.
\end{defn}

For instance, the ribbon $\a = (4,3,2)$ from Example \ref{ex:perms} has full equivalence class, since all of its permutations have support $\{(7,2), (7,1,1), (6,3),(6,2,1), (5,4), (5,3,1), \allowbreak (5,2,2), (4,4,1), (4,3,2)\}$.

\begin{defn} \label{triangle-ineqality}
We say integers $x \leq y \leq z$ satisfy the \textit{strict triangle inequality} if $z < x + y$. In this case, we may also say that the set $\{x,y,z\}$ satisfies the strict triangle inequality.
\end{defn}

\subsection{Yamanouchi Words and Tableaux}

We now introduce the concepts of \textit{Yamanouchi words} and \textit{Yamanouchi tableaux}, which will be essential for using and defining our main tool for proving equality of support --- the Littlewood-Richardson rule. 

A \textit{Yamanouchi word} is a word with the property that all of its prefixes contain no more $(i+1)$'s than $i$'s, for all integers $i \geq 1$. For our purposes, we are concerned with the \textit{reverse reading word} (henceforth ``RRW'') of a tableau, which reads right-to-left across rows and top-to-bottom from one row to the next. A \textit{Yamanouchi tableau} is a tableau whose RRW is a Yamanouchi word.

\begin{example} \label{ex: LR} The tableau depicted in Figure \ref{ex: LR-figure} is Yamanouchi because each prefix of its RRW (i.e. of $112213321$) contains no more 2's than 1's, and contains no more 3's than 2's. 
\begin{figure}
\begin{center}
\young(:::::11,:::122,1233)
\end{center}
\caption{Young tableau with RRW 112213321.}
\label{ex: LR-figure}
\end{figure}
\end{example}

\subsection{Littlewood-Richardson Tableaux}

\textit{Littlewood-Richardson tableaux} (which we often abbreviate as \textit{LR-tableaux}) are tableaux which are both semistandard and Yamanouchi. These tableaux play an important role in the Littlewood-Richardson rule, which we are now ready to state.

\begin{thm}[Littlewood-Richardson rule \cite{LiRi}] \label{LR-rule}
If $$s_{\lambda / \mu} = \sum_{\nu} c_{\mu, \nu}^{\lambda} s_{\nu},$$ then the number of Littlewood-Richardson tableaux of shape $\lambda / \mu$ and content $\nu$ is given by $c_{\mu, \nu}^{\lambda}$.
\end{thm}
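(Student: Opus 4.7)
The plan is to prove the Littlewood-Richardson rule via the machinery of \emph{jeu de taquin} slides, establishing a content-preserving bijection between skew SSYT of shape $\lambda/\mu$ and content $\eta$, and pairs consisting of an LR tableau of shape $\lambda/\mu$ and content $\nu$ together with a straight SSYT of shape $\nu$ and content $\eta$. Summing $x^T$ over both sides and comparing with the expansion $s_{\lambda/\mu} = \sum_\nu c^\lambda_{\mu,\nu} s_\nu$ forces $c^\lambda_{\mu,\nu}$ to equal the number of LR tableaux of shape $\lambda/\mu$ and content $\nu$, as claimed.

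First I would develop jeu de taquin itself. A slide selects an empty cell at an inner corner of the skew shape and propagates it outward by repeatedly swapping it with the smaller of the entry to its right or below (breaking ties downward), yielding a new skew tableau whose inner shape has one fewer box. Iterating slides until the inner shape is empty produces a straight-shape SSYT called the \emph{rectification} of $T$, denoted $\mathrm{rect}(T)$. The heart of the argument is to show that $\mathrm{rect}(T)$ does not depend on the order of slides; this is the ``fundamental theorem of jeu de taquin,'' and it is typically established by a diamond-lemma argument that reduces to verifying agreement for any two slides performed at disjoint inner corners.

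Given order-independence, I would construct the bijection by sending $T$ to the pair $(L(T),\mathrm{rect}(T))$, where $L(T)$ is the recording tableau of shape $\lambda/\mu$ that labels each skew cell by the row into which its entry ultimately settles during rectification. A careful analysis, again leaning on order-independence, shows that $L(T)$ is automatically semistandard and that its reverse reading word is Yamanouchi, so $L(T)$ is an LR tableau of content $\nu := \mathrm{sh}(\mathrm{rect}(T))$. Invertibility comes from running the recorded slides in reverse, and weight preservation is automatic since slides permute entries without changing their multiset.

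The main obstacle is the order-independence of rectification: every subsequent step — the well-definedness of $L(T)$, the Yamanouchi property of its reading word, and the reversibility of the bijection — hinges on this single fact. Once the diamond-lemma verification is in place, the remaining identifications are bookkeeping: one tracks how entries are permuted by slides to confirm that the two sides of the bijection have matching weight generating functions, which is exactly the content of Theorem \ref{LR-rule}.
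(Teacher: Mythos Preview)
The paper does not prove Theorem~\ref{LR-rule} at all: it is stated as a classical result, with references to the original assertion of Littlewood--Richardson and to the first rigorous proofs by Thomas and Sch\"utzenberger, along with a pointer to a short modern proof. So there is no ``paper's own proof'' to compare against; the theorem is simply imported from the literature.

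Your outline is a legitimate sketch of one of the standard routes --- essentially the jeu de taquin approach going back to Sch\"utzenberger, which is among the references the paper cites. The high-level structure (rectification, its order-independence, the recording tableau $L(T)$, and the resulting bijection) is correct. Two caveats are worth flagging. First, what you have written is explicitly a \emph{plan}, not a proof: the entire weight sits on the confluence of jeu de taquin, and ``a diamond-lemma argument that reduces to verifying agreement for any two slides performed at disjoint inner corners'' understates what is involved --- the standard arguments go through Knuth equivalence or through a careful local analysis, and neither is a one-line check. Second, the claim that $L(T)$ is automatically semistandard with Yamanouchi reverse reading word is true but not immediate; it requires its own argument (typically via the characterization of LR tableaux as those that rectify to the superstandard tableau of shape $\nu$). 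If you intend to supply a self-contained proof rather than a citation, these are the places where real work must appear.
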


Although the Littlewood-Richardson rule was originally asserted by Littlewood and Richardson in 1934, they only proved the theorem in certain special cases. The first rigorous proofs of the result were not given until much later, the earliest of which occurred in 1974 and 1976, given by Thomas \cite{Proof1} and Sch{\"u}tzenberger \cite{Proof2}, respectively. For a short proof of the Littlewood-Richardson rule, see \cite{ShortProof}. 

The following corollary follows immediately from Theorem \ref{LR-rule} and the definition of Schur support (Definition \ref{def:ssupport}), and will be more directly applicable to the proofs in the remainder of the paper. In fact, throughout the paper, when we cite ``the Littlewood-Richardson rule,'' we generally are referring to the following corollary of it.

\begin{cor} \label{LR-support}
For any straight shape $\nu$ and skew shape $\lambda / \mu$, we have $\nu \in [\lambda / \mu]$ if and only if there exists a Littlewood-Richardson tableau of shape $\lambda / \mu$ and content $\nu$.
\end{cor}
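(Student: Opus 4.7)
The plan is to chain together two equivalences, one coming from the definition of Schur support and one coming from the Littlewood-Richardson rule stated just above. There is no genuine mathematical content to establish here beyond these two inputs; the corollary is purely a rephrasing.

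First, I would invoke Definition \ref{def:ssupport}, which says that $\nu \in [\lambda/\mu]$ if and only if $c_{\mu,\nu}^{\lambda} > 0$. Second, I would apply Theorem \ref{LR-rule}, which identifies $c_{\mu,\nu}^{\lambda}$ with the number of Littlewood-Richardson tableaux (i.e., semistandard Yamanouchi tableaux) of shape $\lambda/\mu$ and content $\nu$. Since this count is a nonnegative integer, strict positivity is the same as nonemptiness of the set being counted. Chaining the two biconditionals gives the claim:
\[
\nu \in [\lambda/\mu] \iff c_{\mu,\nu}^{\lambda} > 0 \iff \#\{\text{LR-tableaux of shape } \lambda/\mu, \text{ content } \nu\} \geq 1,
\]
which is exactly the desired statement.

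Since the proof is immediate from the preceding theorem and definition, there is no substantive obstacle. The only thing to be careful about is to make sure the biconditional is written in both directions: an LR-tableau of the prescribed shape and content witnesses $c_{\mu,\nu}^{\lambda} \geq 1$ and hence membership in the support, and conversely, membership in the support forces $c_{\mu,\nu}^{\lambda} \geq 1$, which by Theorem \ref{LR-rule} guarantees at least one such tableau exists.
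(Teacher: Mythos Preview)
Your proposal is correct and matches the paper's treatment exactly: the paper states that the corollary ``follows immediately from Theorem \ref{LR-rule} and the definition of Schur support (Definition \ref{def:ssupport})'' without giving any further argument. Your chain of biconditionals is precisely that immediate deduction.
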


\begin{repexample}{ex: LR}
Observe that the tableau in Figure \ref{ex: LR-figure} is both semistandard and Yamanouchi, with content $(4,3,2)$. It follows from Theorem \ref{LR-support} that the straight shape $(4,3,2)$ is in the support of the ribbon with row lengths $(2,3,4)$.
\end{repexample}

\subsection{R-Matrices}
We will now introduce an algorithm which will be instrumental in proving our sufficient condition for a ribbon to have full equivalence class. The \textit{R-matrix algorithm}, described in \cite[Section 2.2.3]{RMat}, provides a way to swap two consecutive row lengths in an arbitrary ribbon with a semistandard filling so that the filling within the two rows remains semistandard and has the same content as before. Note, however, that semistandardness of the ribbon as a whole is not necessarily preserved.

Let $A$ be a ribbon tableau of shape $\alpha = (\alpha_1, \ldots, \alpha_m)$, and assume that $\a_j > \a_{j+1}$ (note that if $\a_j < \a_{j+1}$, we can swap $\a_j$ and $\a_{j+1}$ by performing this algorithm on the antipodal rotation $\a^\circ$, and then taking the antipodal rotation of the result). The algorithm proceeds as follows.

\begin{enumerate}
\item Convert the filling of rows $j$ and $j+1$ to a box-ball system with the boxes corresponding to the $j^{th}$ row on the left and the boxes corresponding to the $(j+1)^{st}$ row on the right (see Example \ref{ex:boxball}).
\item For each ball on the right (in any order), connect it to the unconnected ball on the left strictly above it which is as low as possible. If there is no such ball on the left, connect it to the lowest unconnected ball on the left.
\item Shift all unconnected balls on the left horizontally to the right. 
\item Convert this box-ball system back into rows of a ribbon tableau.
\end{enumerate}

\begin{example} \cite[Sect. 2.2.3]{RMat} \label{ex:boxball}
Suppose we have a ribbon whose $j^{th}$ and $(j+1)^{st}$ rows are as in Figure \ref{rows-bef-swap}. Steps 1-3 of the $R$-matrix algorithm as applied to this partial tableau are depicted in Figure \ref{boxball}. Notice that the only ball movement is two balls in the third box from the top shifting from the left to the right, as these were the only two unconnected balls on the left. After applying step 4 of the $R$-matrix algorithm, we obtain the partial tableau depicted in Figure \ref{swaptableau}. Notice that the row lengths have swapped, while the content and semistandardness of the filling has been preserved, as promised.
\end{example}

\begin{figure}
\begin{center}
\young(::::::13347,::::135)
\caption{Rows before swapping.}
\label{rows-bef-swap}
\end{center}
\end{figure}

\begin{figure}
\includegraphics[scale = 0.45]{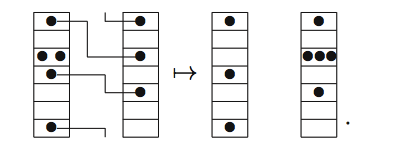}
\centering
\caption{Application of $R$-matrix algorithm.}
\label{boxball}
\end{figure}

\begin{figure}
\begin{center}
\young(::::::::147,::::13335)

\caption{Rows after swapping.}
\label{swaptableau}
\end{center}
\end{figure}

\section{A Sufficient Condition} \label{sec:sufficient}

In this section, we will prove the following sufficient condition for a ribbon to have full equivalence class. 

\begin{customcor}{3.4} \label{suff-real}
Let $\a = (\a_1, \a_2, \ldots, \a_m)$ be a ribbon with each $\a_i \geq 2$ and $m \geq 3$. If all 3-subsets of $\{\a_i\}_{i=1}^m$ satisfy the strict triangle inequality, then $\a$ has full equivalence class.
\end{customcor}

Towards the goal of proving this sufficient condition for a ribbon to have full equivalence class, we begin with two lemmas. In Lemma \ref{R-Yamanouchi}, we prove that the $R$-matrix algorithm preserves the Yamanouchi property of ribbon LR-tableaux. Lemma \ref{R-bot-left} shows that the $R$-matrix algorithm, when applied to a ribbon LR-tableau, preserves some of the semistandardness of the tableau. In particular, if we use the $R$-matrix algorithm to swap rows $j$ and $j+1$ of a ribbon LR-tableau $A$, then semistandardness is preserved between the $(j+1)^{st}$ and $(j+2)^{nd}$ rows of $A$. 

We use the results of these two lemmas to show in Theorem \ref{Tri-Swap-Thing} that under a certain condition on three adjacent row lengths of a ribbon LR-tableau, the bottom two of the three adjacent row lengths can be swapped while preserving the Yamanouchi property and semistandardness of the tableau. By imposing this condition on all rows of the ribbon, we get as a corollary a sufficient condition for a ribbon to have full equivalence class (Corollary \ref{Gen-Tri-Thing}).

\begin{lem} \label{R-Yamanouchi}
Let $A$ be a ribbon LR-tableau of shape $\alpha = (\alpha_1, \ldots , \alpha_m)$ and let $i \in \{1,2,\ldots , m-1\}$. If $A_{(i \; i+1)}$ is the ribbon tableau of shape $\a_{(i \; i+1)}$ that results from applying the $R$-matrix operation to rows $i$ and $i+1$ of $A$, then $A_{(i \; i+1)}$ is a Yamanouchi tableau.
\end{lem}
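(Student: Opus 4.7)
My plan is to verify the Yamanouchi condition for the reverse reading word (RRW) of $A_{(i\;i+1)}$ by checking the prefix inequality $\#_k \geq \#_{k+1}$ at every critical position. Since the $R$-matrix algorithm modifies only rows $i$ and $i+1$ and preserves the combined multiset of their entries, the Yamanouchi inequality at any prefix that ends either before row $i$ or after row $i+1$ is automatically inherited from $A$. It therefore suffices to handle prefixes ending within the two swapped rows.

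The filling of each of the two new rows in $A_{(i\;i+1)}$ is weakly increasing (this part of semistandardness is preserved by the $R$-matrix), so the RRW of each row is weakly decreasing. Hence, for fixed $k$, the deficit $d_k := \#_k - \#_{k+1}$ achieves its minimum within row $i$ (resp.\ row $i+1$) exactly at the position right after the last $(k+1)$ from that row is read. Let $D_k$ be the value of $d_k$ at the end of row $i-1$, write $x_j$ (resp.\ $y_j$) for the number of $j$'s in row $i$ (resp.\ row $i+1$) of $A$, and write $\tilde x_j, \tilde y_j$ for the corresponding counts in $A_{(i\;i+1)}$. Using $\tilde x_j + \tilde y_j = x_j + y_j$, Yamanouchi for $A_{(i\;i+1)}$ reduces to verifying, for every $k \geq 1$, the inequalities (I) $D_k \geq \tilde x_{k+1}$ and (II) $D_k + \tilde x_k \geq x_{k+1} + y_{k+1}$. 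Inequality (I) is immediate: the $R$-matrix moves balls only from the left column to the right, so $\tilde x_{k+1} \leq x_{k+1} \leq D_k$, the last inequality being Yamanouchi for $A$ at the analogous position inside row $i$.

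The substance of the proof lies in (II), which I plan to deduce from the sub-claim that $\tilde x_k \geq \min(x_k, y_{k+1})$, i.e.\ at least $\min(x_k, y_{k+1})$ of the $x_k$ box-$k$ left balls are matched by the algorithm. Granting this sub-claim, (II) follows by a case split. If $y_{k+1} \leq x_k$, then $\tilde x_k \geq y_{k+1}$ and $D_k \geq x_{k+1}$ combine to give (II). If $y_{k+1} > x_k$, the sub-claim forces $\tilde x_k = x_k$, and the Yamanouchi inequality $D_k + x_k \geq x_{k+1} + y_{k+1}$ for $A$ gives (II) directly.

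To prove the sub-claim, I would exploit the fact --- implicit in the algorithm's stipulation ``for each ball on the right (in any order)'' --- that the output matching is independent of the order in which right balls are processed. Processing the $y_{k+1}$ right balls in box $k+1$ first, each such ball connects, by the matching rule, to the largest-indexed unmatched left ball in a box strictly less than $k+1$; this is a box-$k$ left ball whenever one is still available. Hence, after this initial phase, exactly $\min(x_k, y_{k+1})$ box-$k$ left balls are already matched, and since the matching is monotone (matched balls stay matched), this lower bound persists through the remainder of the algorithm. I anticipate the main subtlety to be cleanly justifying the order-independence of the matching, which should follow from a short exchange argument comparing any two valid processing orders.
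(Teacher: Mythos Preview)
Your argument is correct and is essentially the paper's own proof: with $k$ playing the role of the paper's $j$, your $D_k$, $x_k$, $x_{k+1}$, $y_{k+1}$, $\tilde x_k$ are exactly its $M$, $a$, $b$, $d$, $x$; your inequalities (I) and (II) are its bounds $r(y)\le M$ and $r(b+d+x)\le M$; and your sub-claim $\tilde x_k \ge \min(x_k,y_{k+1})$ is precisely its ``Notice that $x \ge \min(a,d)$,'' which the paper asserts without the processing-order justification you supply. The only cosmetic difference is that you spell out the order-independence argument for the matching, whereas the paper treats it as implicit in the phrase ``in any order'' from the $R$-matrix description.
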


\begin{proof}
Since $A$ is Yamanouchi, we only need to show that the prefixes of the RRW up through the $i^{th}$ and $(i+1)^{st}$ rows of $A_{(i \; i+1)}$ are Yamanouchi. Consider the $i^{th}$ and $(i+1)^{st}$ rows of $A$ as a subtableau $B$ of $A$. Define $B_{(i \; i+1)}$ analogously with respect to $A_{(i \; i+1)}$. Fix any positive integer $j$ and assume that $j$ and $j+1$ appear in $B$ as shown in Figure \ref{B1}.
\begin{figure}
\begin{center}
\includegraphics[scale=.25]{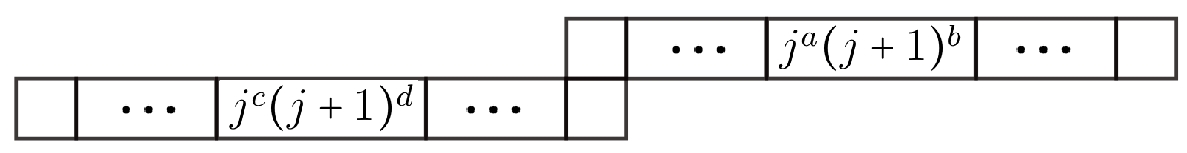}
\end{center}
\caption{Integers $j$ and $j+1$, as they appear in $B$.}
\label{B1}
\end{figure}

Let $\eta_{j}$ and $\eta_{j+1}$ denote the number of $j$'s and $(j+1)$'s, respectively, in the RRW of $A_{(i \; i+1)}$ by the end of the $(i-1)^{st}$ row. Let $M = \eta_{j} - \eta_{j+1}$. Since $A$ is assumed to be Yamanouchi, we have that $M \geq b$ and $M \geq b+d-a$.

Let $x$ be the number of connected $j$'s on the left when executing the $R$-matrix algorithm. Similarly, let $y$ be the number of connected $(j+1)$'s on the left. Notice that $x \geq \min(a,d)$. Following the $R$-matrix algorithm, $j$ and $j+1$ occur in $B_{(i \; i+1)}$ as shown in Figure \ref{B2}.

\begin{figure}
\begin{center}
\includegraphics[scale=.25]{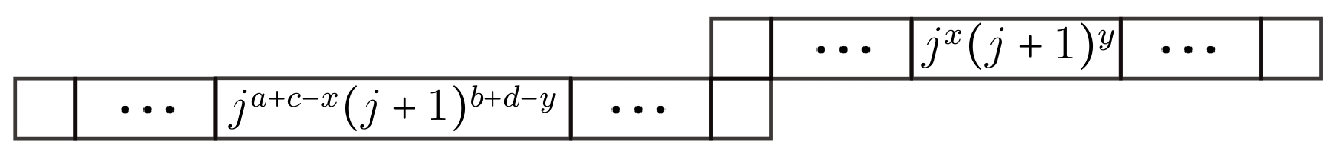}
\end{center}
\caption{Integers $j$ and $j+1$, as they appear in $B_{(i \; i+1)}$.}
\label{B2}
\end{figure}

Let $R$ be the word $(j+1)^y j^x (j+1)^{b+d-y} j^{a+c-x}$. Define the function $r(n)$ to be the number of $(j+1)$'s minus the number of $j$'s in the first $n$ elements of the word $R$. Clearly $r$ is maximal after a string of $(j+1)$'s, so either after the initial length $y$ string of $(j+1)$'s or after $b+d+x$ elements of $R$. We only have left to show that the function $r$ never exceeds $M$.

Notice that $r(y) = y$ and $r(b+d+x) = (y+(b+d-y)) - x = b + d -x $. Since $y \leq b \leq M$, we have that $r(y) \leq M$. For $r(b+d+x)$, we consider two cases. If $x \geq d$, then $r(b+d+x) = b + d -x \leq b \leq M$, as desired. On the other hand, if $x < d$, then since $x \geq \min (a,d)$ (as noted above), we have $x \geq a$. Then $r(b+d+x) = b + d -x \leq b + d - a \leq M$. This completes the proof.
\end{proof}

We have just shown that the $R$-matrix algorithm preserves the Yamanouchi property of ribbon LR-tableaux. Recall from the Preliminaries that the $R$-matrix operation as applied to a ribbon LR-tableau, preserves semistandardness within the two rows that are swapped; however, semistandardness of the entire ribbon is not necessarily preserved. Our next lemma will show that applying the $R$-matrix algorithm to rows $i$ and $i+1$ of a ribbon LR-tableau cannot increase the leftmost element in the $(i+1)^{st}$ row. This result is a step towards establishing how we might use the $R$-matrix algorithm while preserving the semistandardness of the entire ribbon.

\begin{lem} \label{R-bot-left}
Let $A$ be a ribbon LR-tableau of shape $\alpha = (\alpha_1, \ldots, \alpha_m)$ and let $i \in \{1,2,\ldots , m-1\}$. Denote by $A_{(i \; i+1)}$ the ribbon of shape $\a_{(i \; i+1)}$ obtained by applying the $R$-matrix algorithm to rows $i$ and $i+1$ of $A$. Let $x$ be the leftmost element of the $(i+1)^{st}$ row of $A$ and let $y$ be the leftmost element of the $(i+1)^{st}$ row of $A_{(i \; i+1)}$. Then $y \leq x$.
\end{lem}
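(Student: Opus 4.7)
The plan is to track what happens to the multi-set of entries in row $(i+1)$ under the $R$-matrix swap. The essential observation is that, in the forward form of the algorithm (the case $\alpha_i > \alpha_{i+1}$), Step 3 moves balls only from the left side of the box-ball diagram (corresponding to row $i$) to the right side (corresponding to row $i+1$); no ball on the right is ever deleted. Consequently the multi-set of entries in the new row $(i+1)$ contains the multi-set of the old row $(i+1)$, so its minimum can only weakly decrease. Since both of the swapped rows remain weakly increasing in the output (semistandardness within the pair being preserved by the algorithm), the leftmost entry of each row coincides with the minimum of its multi-set, which yields $y \leq x$.

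Concretely, for the case $\alpha_i > \alpha_{i+1}$ I would read off Step 3 of the algorithm to obtain
\[
M_{\mathrm{new}} \;=\; M_{\mathrm{old}} \;\cup\; (\text{unconnected left balls}),
\]
where $M_{\mathrm{new}}$ and $M_{\mathrm{old}}$ denote the multi-sets of entries in the new and old row $(i+1)$, respectively. In particular $M_{\mathrm{new}} \supseteq M_{\mathrm{old}}$, so
\[
y \;=\; \min M_{\mathrm{new}} \;\leq\; \min M_{\mathrm{old}} \;=\; x.
\]
The case $\alpha_i = \alpha_{i+1}$ is vacuous, as the algorithm then acts as the identity on rows $i$ and $i+1$.

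For the remaining case $\alpha_i < \alpha_{i+1}$, I would pass to the antipodal rotation $A^\circ$, in which the two rows being swapped have the longer one on top; the forward algorithm then applies directly, the same multi-set-containment argument runs, and the conclusion is transported back to $A$. The main obstacle I expect lies in this last step, because antipodal rotation interchanges the roles of \emph{leftmost} and \emph{rightmost} within each row, so one must check carefully that the entry governing $y$ in $A$ corresponds, through the rotation, to the extremal entry protected by the multi-set containment in $A^\circ$. This alignment is where the Yamanouchi structure of $A$---the fact that $A$ is an LR-tableau rather than an arbitrary semistandard tableau---should enter, and it is the most delicate part of the argument.
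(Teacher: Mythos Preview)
Your argument for the case $\alpha_i > \alpha_{i+1}$ is exactly the paper's proof: the old row-$(i+1)$ entries all remain in the new row $(i+1)$ (since Step~3 only moves balls from left to right), and semistandardness within the pair gives $y = \min M_{\mathrm{new}} \leq \min M_{\mathrm{old}} = x$. The paper states this in two sentences and does not treat the case $\alpha_i < \alpha_{i+1}$ at all---nor does it need to, since the lemma is only invoked in Theorem~\ref{Tri-Swap-Thing}, which explicitly assumes $\alpha_i > \alpha_{i+1}$. Your expectation that the Yamanouchi hypothesis is where the difficulty lies is therefore moot here; in fact the paper's proof never uses the LR hypothesis at all, so the lemma holds for any semistandard filling of the two rows.
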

\begin{proof}
Since $x$ is in the $(i+1)^{st}$ row of $A$, by the $R$-matrix algorithm, there is also an $x$ in the $(i+1)^{st}$ row of $A_{(i \; i+1)}$. The result now follows from the fact that the $R$-matrix operation preserves semistandardness within the two rows.
\end{proof}

The remaining way in which $A_{(i \; i+1)}$ may fail to be semistandard is for the number in the rightmost box of the $i^{th}$ row of $A_{(i \; i+1)}$ to be less than or equal to the number in the leftmost box of the $(i-1)^{st}$ row (where $A$ and $A_{(i \; i+1)}$ are as in Lemmas \ref{R-Yamanouchi} and \ref{R-bot-left}). This last obstacle is the main focus of the following proof.

\begin{thm} \label{Tri-Swap-Thing}
Let $\a = (\a_1, \a_2, \ldots, \a_m)$ be a ribbon, where each $\a_i \geq 2$ and $m \geq 3$. Suppose $\a_i > \a_{i+1}$ and $\a_i<\a_{i-1} + \a_{i+1}$ for some $1\leq i \leq (m-1)$, where $\a_0 = \infty$ for notational convenience. Then $[\a] \subseteq [\a_{(i \; i+1)}]$.
\end{thm}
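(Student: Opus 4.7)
The plan is: given $\nu \in [\alpha]$, use Corollary \ref{LR-support} to obtain an LR tableau $A$ of shape $\alpha$ with content $\nu$, apply the $R$-matrix algorithm to rows $i$ and $i+1$ of $A$, and argue that the resulting tableau $A_{(i\;i+1)}$ of shape $\alpha_{(i\;i+1)}$ is itself an LR tableau, which would give $\nu \in [\alpha_{(i\;i+1)}]$. By Lemma \ref{R-Yamanouchi}, $A_{(i\;i+1)}$ is Yamanouchi. The $R$-matrix preserves semistandardness within the two swapped rows, and Lemma \ref{R-bot-left} handles semistandardness at the boundary between the new row $i+1$ and row $i+2$. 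So the remaining task is to establish semistandardness at the boundary between row $i-1$ and the new row $i$, i.e., to show $R'_i > L$, where $R'_i$ denotes the rightmost entry of the new row $i$ and $L = L_{i-1}$ denotes the leftmost entry of row $i-1$.

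I would prove $R'_i > L$ by contradiction. Suppose $R'_i \leq L$. Since the new row $i$ is weakly increasing with maximum $R'_i$, every one of its $\alpha_{i+1}$ entries is $\leq L$; these entries are exactly the matched left balls of the $R$-matrix, so every left ball of value $>L$ is unmatched and shifts into the new row $i+1$. Letting $k$ denote the number of entries in row $i$ of $A$ that exceed $L$, this forces $k \leq \alpha_i - \alpha_{i+1}$. The hypothesis $\alpha_i < \alpha_{i-1} + \alpha_{i+1}$ rearranges to $\alpha_i - \alpha_{i+1} < \alpha_{i-1}$, yielding $k < \alpha_{i-1}$.

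The hard part is to turn $k < \alpha_{i-1}$ into an actual contradiction. My approach would be to combine the Yamanouchi property of $A$ with the column constraint $L < R_i$ at the boundary between rows $i-1$ and $i$ in the original $A$: the $\alpha_{i-1}$ entries of row $i-1$ are all $\geq L$, and the Yamanouchi growth condition applied as we pass from row $i-1$ through rows $i$ and $i+1$ should sharply constrain how values $>L$ can be distributed. The delicate bookkeeping — tracking which right balls match which left balls in the $R$-matrix, and ensuring that the hypothesis forces at least one left ball $>L$ to be matched — is the main obstacle. If this direct counting is difficult to close, the fallback is to first modify $A$ into an equivalent LR tableau $\tilde A$ of shape $\alpha$ and content $\nu$ on which the $R$-matrix output is LR (for example by shifting a large entry from row $i+1$ into row $i$ compensated by a small entry moving the other way), with the inequality $\alpha_i - \alpha_{i+1} < \alpha_{i-1}$ guaranteeing room for such a modification.
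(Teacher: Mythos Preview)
Your primary strategy---showing that the $R$-matrix output $A_{(i\;i+1)}$ is automatically semistandard under the triangle hypothesis---does not work, and the contradiction you are hoping to extract from $k<\alpha_{i-1}$ does not exist. Take $\alpha=(3,4,2)$ with $i=2$ (so $\alpha_i=4<3+2=\alpha_{i-1}+\alpha_{i+1}$), and let $A$ have rows $(1\,1\,1)$, $(1\,1\,1\,2)$, $(2\,2)$; this is an LR tableau of content $(6,3)$. Applying the $R$-matrix to rows $2$ and $3$ yields new rows $(1\,1)$ and $(1\,2\,2\,2)$, so $R'_i=1=L$ and $A_{(i\;i+1)}$ is not semistandard. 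In your notation $k=1<\alpha_{i-1}=3$, and nothing contradictory follows; the inequality $k<\alpha_{i-1}$ is simply too weak to rule this situation out.

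The paper's proof accepts that $A_{(i\;i+1)}$ may fail to be semistandard and instead repairs it by a sequence of local swaps. After reducing (by easy swaps) to the situation where row $i-1$ is essentially constant equal to $r'$ and the new row $i$ is entirely $r'$, one swaps the $r'$ in the top-right box of row $i$ with the leftmost entry $z>r'$ in the new row $i+1$, producing a semistandard tableau; the triangle inequality enters only here, to show that if this swap breaks the Yamanouchi property then one must have $z=r'+1$ and $\alpha_i=\alpha_{i-1}+\alpha_{i+1}-1$ with the new row $i+1$ consisting entirely of $z$'s. A further upward scan then locates a workable swap in that residual case. Your fallback of pre-modifying $A$ is in a similar spirit but is not the argument the paper gives, and as stated it is not precise enough to constitute a proof; in particular you would still need to identify exactly which entries to exchange and to verify that both the LR property of the modified input and the semistandardness of its $R$-matrix output hold, which is where all the real work lies.
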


\begin{proof}
Let $A$ be a ribbon LR-tableau with shape $\a$ and content $\mu$. By the Littlewood-Richardson rule, it suffices to show that there is some ribbon LR-tableau with shape $\a_{(i \; i+1)}$ and content $\mu$. 
 
Applying the $R$-matrix algorithm to rows $i$ and $i+1$ of $A$ yields a ribbon tableau $A_{(i \; i+1)}$ of shape $\a_{(i \; i+1)}$ and content $\mu$. By Lemma \ref{R-Yamanouchi}, $A_{(i \; i+1)}$ is Yamanouchi. If $A_{(i \; i+1)}$ is also semistandard, we are done. Recall that applying the $R$-matrix algorithm to rows $i$ and $i+1$ of $A$ preserves the semistandardness within the two rows. Moreover, we have by Lemma \ref{R-bot-left} that the leftmost entry in the $(i+1)^{st}$ row of $A_{(i \; i+1)}$ is not greater than that of $A$. Consequently, if $i = 1$, then $A_{(i \; i+1)}$ is semistandard, and we are done. Supposing $i>1$, the only way in which $A_{(i \; i+1)}$ can fail to be semistandard is if the rightmost entry in the $i^{th}$ row of $A_{(i \; i+1)}$ is less than or equal to the leftmost entry in the $(i-1)^{st}$ row of $A_{(i \; i+1)}$. Assume that this is the case (i.e. that $A_{(i \; i+1)}$ is not semistandard). In the remainder of the proof, we show that $A_{(i \; i+1)}$ can be modified to produce a tableau which is both semistandard and Yamanouchi. Each modification will preserve both shape and content, so the resulting tableau will still have shape $\a_{(i \; i+1)}$ and content $\mu$.

\begin{figure}
\begin{center}
\includegraphics[scale=.25]{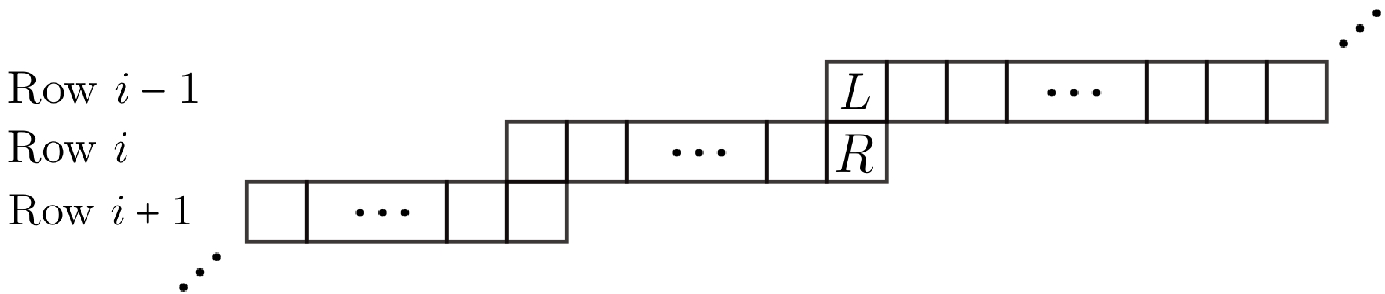}
\includegraphics[scale=.25]{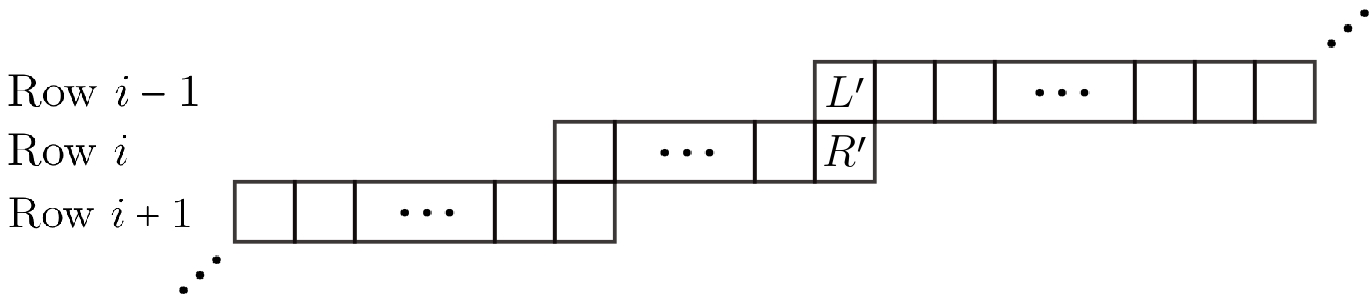}
\end{center}
\caption{Box labellings of $A$ (top) and $A_{(i \; i+1)}$ (bottom).}
\label{alpha-boxes}
\end{figure}

Let $L$ (resp. $L'$) denote the leftmost box in the $(i-1)^{st}$ row of $A$ (resp. $A_{(i \; i+1)}$). Similarly, let $R$ (resp. $R'$) denote the rightmost box in the $i^{th}$ row of $A$ (resp. $A_{(i \; i+1)}$). These labellings are shown in Figure \ref{alpha-boxes}. Let $\ell$, $r$, $\ell'$, and $r'$ be the entries in boxes $L$, $R$, $L'$, and $R'$, respectively. By our assumption that $A_{(i \; i+1)}$ is not semistandard, we have that $\ell' \geq r'$. If $\ell' > r'$, we can simply swap the positions of $\ell'$ and $r'$ to obtain a semistandard tableau. This swap preserves the Yamanouchi property, as the RRW of the resulting tableau can be obtained from that of $A_{(i \; i+1)}$ without moving $i$ to be before $i-1$ for any $2 \leq i \leq m$. We can therefore assume that $\ell' = r'$. 

Let $w$ denote the leftmost entry in the $(i-1)^{st}$ row of $A_{(i \; i+1)}$ that is greater than $r' = \ell'$ (if such a $w$ exists). If $w$ is not the rightmost entry of the row, then we can swap it with the $r'$ in box $R'$, yielding a semistandard tableau $A_{(i \; i+1),w}$. That $A_{(i \; i+1),w}$ is Yamanouchi follows easily from the fact that $A_{(i \; i+1)}$ is Yamanouchi. 

Therefore, in the case that $w$ exists and is not the rightmost entry of row $i-1$, we have obtained an LR-tableau with content $\mu$. Thus we may assume that all entries, except possibly the rightmost, of the $(i-1)^{st}$ row of $A_{(i \; i+1)}$ equal $r'=\ell'$. Recall that $A$ and $A_{(i \; i+1)}$ differ only in their $i^{th}$ and $(i+1)^{st}$ rows. Therefore, our assumptions regarding the entries of the $(i-1)^{st}$ row of $A_{(i \; i+1)}$ apply also to the $(i-1)^{st}$ row of $A$. These assumptions are depicted in Figure \ref{fig:Update1}.

\begin{figure}
\begin{center}
\includegraphics[scale=.25]{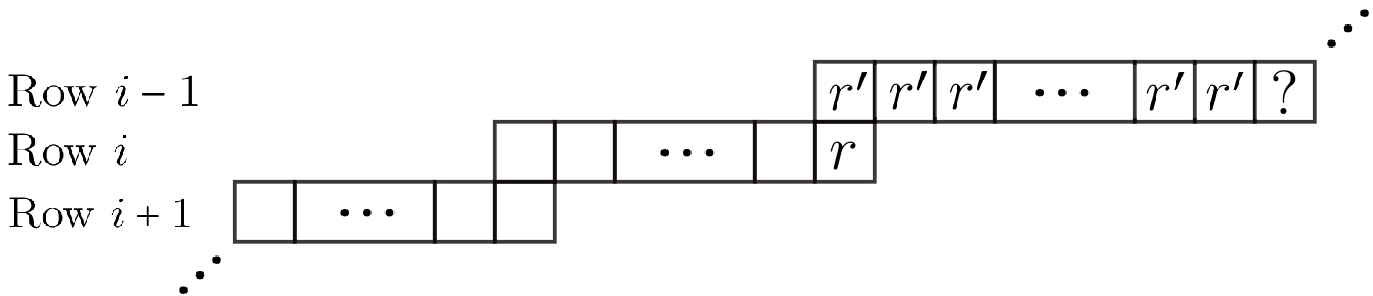}
\includegraphics[scale=.25]{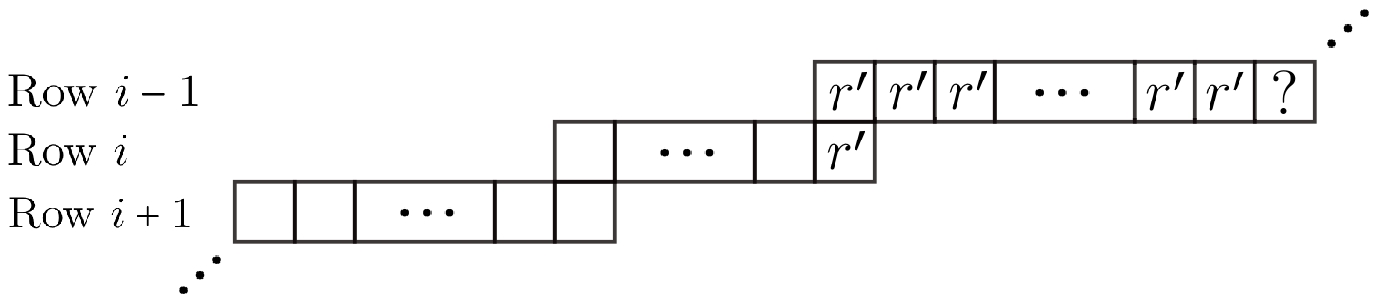}
\end{center}
\caption{Assumed entries of $A$ (top) and $A_{(i \; i+1)}$ (bottom).}
\label{fig:Update1}
\end{figure}

Similarly, let $x$ be the rightmost entry of the $i^{th}$ row of $A_{(i \; i+1)}$ that is less than $r'$ (if such an $x$ exists). Unless $x$ is the leftmost entry of the row, we can swap $x$ with the $r'$ in box $L'$ to obtain a semistandard tableau. As before, the resulting tableau is Yamanouchi. We can therefore assume that all entries in the $i^{th}$ row of $A_{(i \; i+1)}$, except possibly the leftmost, are equal to $r'$.

Since $A$ is semistandard, the element $r$ in box $R$ of $A$ must be strictly greater than $r'$. Recall that the $i^{th}$ and $(i+1)^{st}$ rows of $A$ and $A_{(i \; i+1)}$ have the same content. Noting that all entries in the $i^{th}$ row of $A_{(i \; i+1)}$ are at most $r'$, there must therefore be an $r$ in the $(i+1)^{st}$ row of $A_{(i \; i+1)}$.

Now, let $y$ denote the leftmost entry of the $i^{th}$ row of $A_{(i \; i+1)}$, and consider the case where $y \neq r'$. Using that the $R$-matrix algorithm preserves semistandardness within the rows it swaps, we have that $y < r'$. We can therefore swap this $y$ with the $r'$ in box $L'$ of $A_{(i \; i+1)}$ to obtain a semistandard tableau. As before, this swap does not violate the Yamanouchi property. We may therefore assume that $y = r'$. By the above arguments, we may now assume that the entries of $A_{(i \; i+1)}$ are as shown in Figure \ref{fig:alpha-i-i+1-fill}.

\begin{figure} 
\begin{center}
\includegraphics[scale=.25]{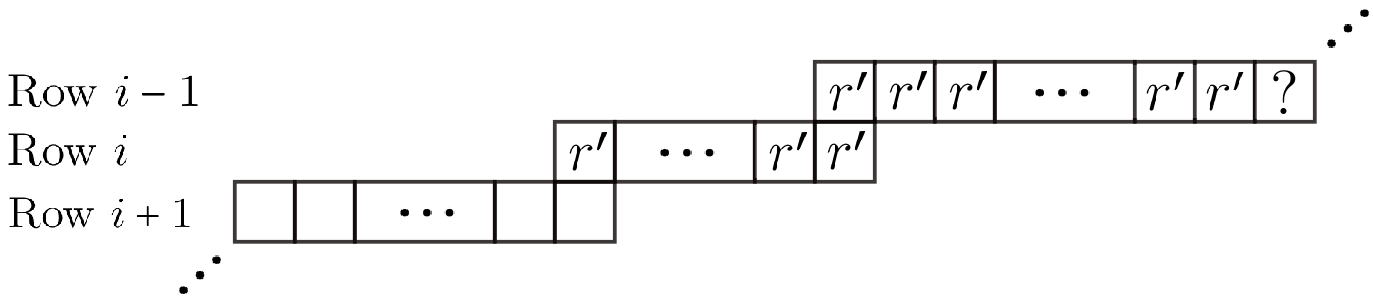}
\end{center}
\caption{Assumed entries of $A_{(i \; i+1)}$.}
\label{fig:alpha-i-i+1-fill}
\end{figure}

From here, the idea is to swap the rightmost $r'$ in the $i^{th}$ row of $A_{(i \; i+1)}$ with the appropriate entry from the row below to obtain an LR-tableau. We will show, using our triangle inequality assumption, that such a swap can be made in all but a very narrow set of cases. Finally, we will handle the exceptional cases by scanning $A_{(i \; i+1)}$ upwards looking for potential swaps.

Consider the leftmost box in the $(i+1)^{st}$ row of $A_{(i \; i+1)}$ with entry greater than $r'$; call this entry $z$. We claim that the aforementioned box cannot be the rightmost box of the $(i+1)^{st}$ row of $A_{(i \; i+1)}$. Suppose, for the sake of contradiction, that this box is in fact the rightmost box of the $(i+1)^{st}$ row of $A_{(i \; i+1)}$. Then each of $A$ and $A_{(i \; i+1)}$ have only one entry that is greater than $r'$ (namely, $z$) in their $i^{th}$ and $(i+1)^{st}$ rows. In particular, using our previous notation (see Figure \ref{fig:Update1}), we have that $r = z$. Since there is only one element greater than $r'$ in these two rows, we have by the semistandardness of $A$ that the leftmost element in the $i^{th}$ row of $A$ is strictly less than $r'$ (otherwise two $r'$ would be in the same column of $A$). However, the $R$-matrix algorithm makes it impossible for both $z$ and the entry less than $r'$ to be moved to the $(i+1)^{st}$ row, while $r'$'s remain in the $i^{th}$ row. We therefore conclude that the leftmost box in the $(i+1)^{st}$ row of $A_{(i \; i+1)}$ with entry greater than $r'$ is not the rightmost box of the row. Consequently, swapping the leftmost $z$ of the $(i+1)^{st}$ row of $A_{(i \; i+1)}$ with the $r'$ in box $R'$ of $A_{(i \; i+1)}$ will produce a semistandard tableau $A_{(i \; i+1)}^{z}$ (see Figure \ref{fig:z-tab}). If $A_{(i \; i+1)}^{z}$ is Yamanouchi, then the proof is complete. We now show that $A_{(i \; i+1)}^{z}$ can only fail to be Yamanouchi in a very narrow set of cases.

\begin{figure} 
\begin{center}
\includegraphics[scale=.25]{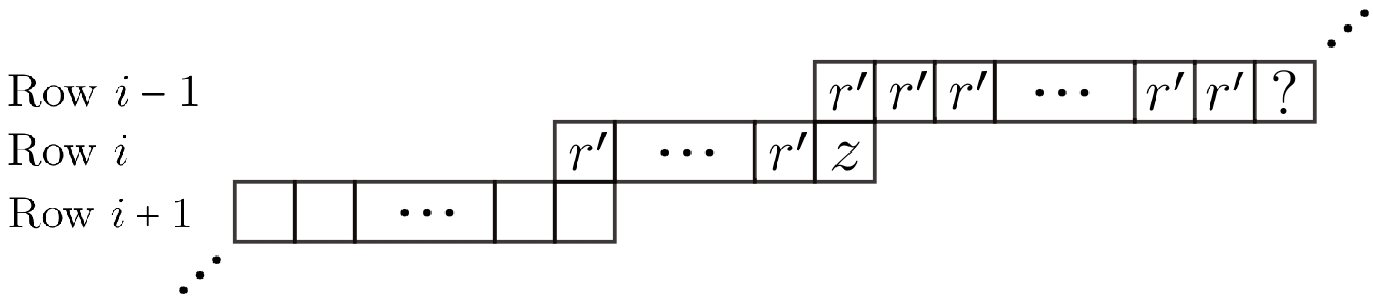}
\end{center}
\caption{Entries of $A_{(i \; i+1)}^{z}$.}
\label{fig:z-tab}
\end{figure}

Let $X$ denote the subword of the RRW of $A_{(i \; i+1)}$ and $A_{(i \; i+1)}^z$ formed by the entries strictly between the entries we swapped to obtain $A_{(i \; i+1)}^{z}$ from $A_{(i \; i+1)}$ (see Figure \ref{fig:xwz}). If $z \neq r'+1$, then this subword does not contain either $r' + 1$ or $z -1$. Consequently, this swap preserves the Yamanouchi property when $z \neq r'+1$. We may therefore assume that $z = r' +1$. Thus, given that $A_{(i \; i+1)}$ is Yamanouchi, the only way in which $A_{(i \; i+1)}^{z}$ can fail to be Yamanouchi is if the following phenomenon occurs: within the subword $X$, the number of $z$'s in the RRW of $A_{(i \; i+1)}^{z}$ overtakes the number of $r'$'s in the RRW of $A_{(i \; i+1)}^{z}$. To understand the narrow set of cases in which this phenomenon can occur, let us further investigate the RRW of $A_{(i \; i+1)}^{z}$.

Noting that the subword $X$ begins with $\a_{i+1}-1$ occurrences of $r'$, it suffices to investigate the portion of $X$ determined by elements in the $(i+1)^{st}$ row of $A_{(i \; i+1)}^{z}$. If there are no $z$'s in this row, then there is nothing to check. Therefore, assume that there is at least one $z$ in the $(i+1)^{st}$ row of $A_{(i \; i+1)}^{z}$. By the semistandardness of $A_{(i \; i+1)}^{z}$, this means that there is exactly one contiguous string of $z$'s in this portion of $X$ (say, of length $k$). Moreover, by definition of $X$, this contiguous string is a suffix of $X$.

Let $Z$ denote the prefix of the RRW of $A_{(i \; i+1)}^{z}$ formed by truncating after $X$. It follows that if $A_{(i \; i+1)}^{z}$ violates the Yamanouchi property, then so does $Z$. Also, let $W$ denote the prefix of the RRW of $A_{(i \; i+1)}$ and $A_{(i \; i+1)}^{z}$ of length $\sum_{j=1}^{i-2} (\a_j) +1$ (i.e. the prefix ending immediately after the rightmost element of the $(i-1)^{st}$ row). Note that we can write $Z = W \; (r')^{\a_{i-1}-1} \; z \; X$ (see Figure \ref{fig:xwz}). The prefix of $A_{(i \; i+1)}$ of the same length can be written as $W \; (r')^{\a_{i-1}} \; X$ (see Figure \ref{fig:xwz}).

\begin{figure} 
\begin{center}
\includegraphics[scale=.21]{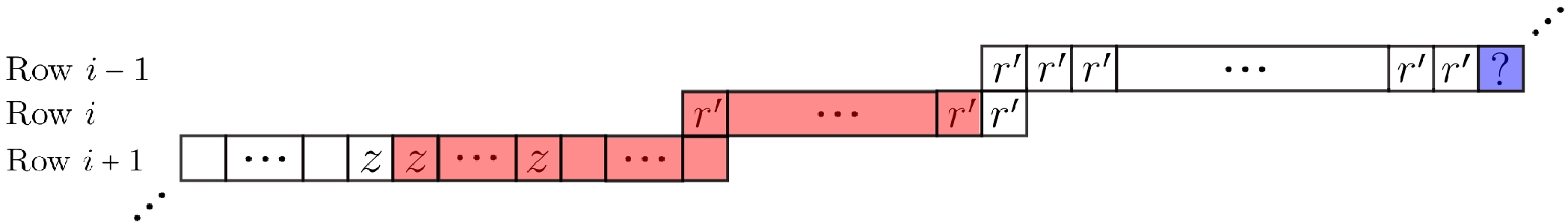}
\includegraphics[scale=.21]{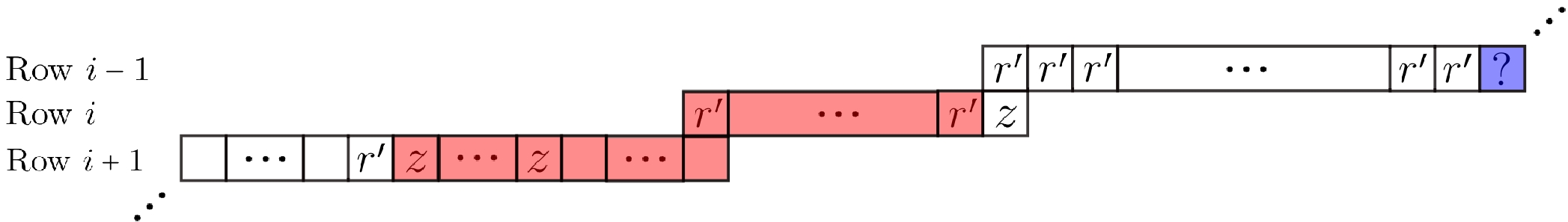}
\end{center}
\caption{This figure (top) depicts entries of $A_{(i \; i+1)}$ as well as subwords of the RRW of $A_{(i \; i+1)}$. The red shading indicates entries corresponding to the subword $X$, while the blue shading indicates the last element of the subword $W$. This figure (bottom) also depicts the analogous entries and subwords of $A_{(i \; i+1)}^{z}$.}
\label{fig:xwz}
\end{figure}

Let $d$ be the number of $r'$'s that occur in $W$ minus the number of $z$'s that occur in $W$. Since $W$ is Yamanouchi and $z = r'+1$, we have that $d \geq 0$. Recalling that $A_{(i \; i+1)}$ is Yamanouchi, we have
\begin{equation} \label{eq:thing1}
d + (\a_{i-1} -1) + \a_{i+1} -(k+1) \geq 0.
\end{equation}
Suppose that $A_{(i \; i+1)}^{z}$ is not Yamanouchi. Then
\begin{equation} \label{eq:thing2}
d + (\a_{i-1}-1) -1 + (\a_{i+1}-1) - k = (d + (\a_{i-1} -1) + \a_{i+1} -(k+1)) - 1 < 0.
\end{equation}
Together, (\ref{eq:thing1}) and (\ref{eq:thing2}) give that $d + (\a_{i-1} -1) + \a_{i+1} -(k+1) = 0$. Noting that $k+1 \leq \a_i \leq \a_{i-1} + \a_{i+1}-1$, we have
\begin{align*}
d &= d + (\a_{i-1} + \a_{i+1} -1) - (\a_{i-1} + \a_{i+1} -1) \\
& \leq d+ \a_{i-1} + \a_{i+1} -1 - \a_i \\
& \leq d + (\a_{i-1}-1) + \a_{i+1} - (k+1) = 0.
\end{align*}
Recalling that $d \geq 0$, this gives that $d = 0$. This is the crucial consequence of the rows satisfying a strict triangle inequality. 

Plugging this into (\ref{eq:thing2}) gives that $\a_{i-1} + \a_{i+1} -1 \leq k+1$. Thus, we have
$$\a_{i-1} + \a_{i+1} -1 \leq k+1 \leq \a_i \leq \a_{i-1} + \a_{i+1} - 1,$$
meaning $\a_{i-1} + \a_{i+1} -1 = k+1 = \a_i$. This means that all $\a_i$ entries in the $(i+1)^{st}$ row of $A_{(i \; i+1)}$ are $z$'s.

Our consideration of $A_{(i \; i+1)}^{z}$ is now complete; the assumption that $A_{(i \; i+1)}^{z}$ is not Yamanouchi has allowed us to derive powerful constraints on $A_{(i \; i+1)}$. We will now complete the proof by investigating $A_{(i \; i+1)}$. In particular, we will scan $A_{(i \; i+1)}$ upwards from the $i^{th}$ row, arguing that we must eventually find an entry we can swap with the $r'$ in either box $L'$ or box $R'$ of $A_{(i \; i+1)}$ to obtain an LR-tableau.

\begin{figure}
\begin{center}
\includegraphics[scale=.235]{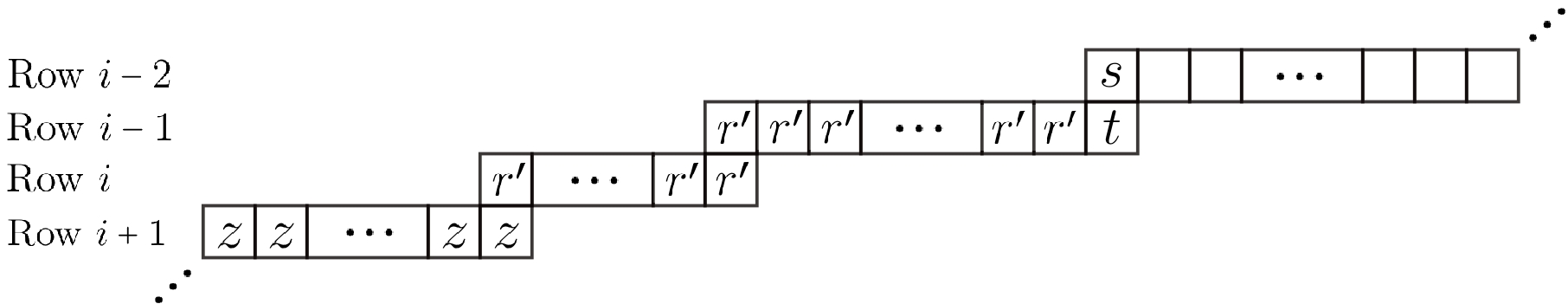}
\end{center}
\caption{Assumed entries of $A_{(i \; i+1)}$.}
\label{fig:FinalTab}
\end{figure}

Let $s$ denote the leftmost entry in the $(i-2)^{nd}$ row of $A_{(i \; i+1)}$, and let $t$ denote the rightmost entry in the $(i-1)^{st}$ row of $A_{(i \; i+1)}$ (see Figure \ref{fig:FinalTab}). (Note that the $(i-1)^{st}$ row cannot be the top row of $A_{(i \; i+1)}$, as this would require that $t = r' = 1$, which would be incompatible with the assumptions that $d = 0$ and that $A_{(i \; i+1)}$ is Yamanouchi.)

We will now argue that we can assume that $s=r'$. First note that $t \neq r'$, as if $t=r'$, then the assumption that $d=0$ would imply that the prefix of the RRW ending immediately before $t$ contains one more $z$ than $r'$, a violation of the Yamanouchi property. Therefore $t > r'$. 

If $s < r'$, then the $r'$ in box $R'$ can be swapped with $t$ to obtain an LR-tableau. If $s > r'$, then we can perform the following two swaps to obtain an LR-tableau: swap the $r'$ in box $R'$ with $t$, and then swap the same $r'$ with the $s$ above it. We can therefore assume that $s=r'$. 

Moreover, we claim that $t = z = r' +1$. This is because $d = 0$; if $t \neq z$, then the prefix of the RRW of $A_{(i \; i+1)}$ formed by truncating immediately before $s$ is not Yamanouchi. These updated assumptions are shown in Figure \ref{fig:FinalUpdated}. 

\begin{figure}
\begin{center}
\includegraphics[scale=.235]{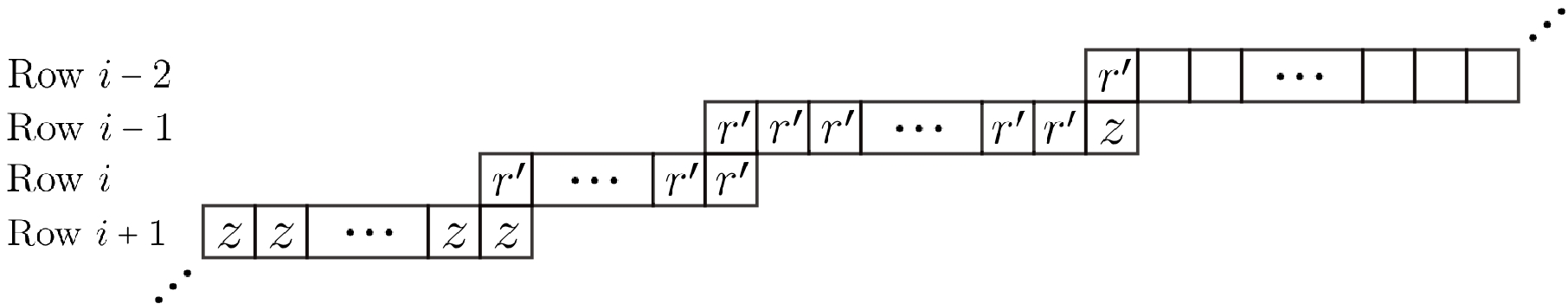}
\end{center}
\caption{Assumed entries of $A_{(i \; i+1)}$.}
\label{fig:FinalUpdated}
\end{figure}

A row of $A_{(i \; i+1)}$ will be called \textit{trivial} if it is of length two and its entries are ``$r' \; z$'' when reading from left to right across the row; otherwise, the row will be called \textit{nontrivial}. There must be a nontrivial row among the first $i-2$ rows of $A_{(i \; i+1)}$, as otherwise the first letter of the RRW of $A_{(i \; i+1)}$ would be $z > r'\geq 1$. Choose the maximal $j \leq i-2$ such that the $j^{th}$ row of $A_{(i \; i+1)}$ is nontrivial.

By our choice of $j$, all rows strictly between the $j^{th}$ and $(i-1)^{st}$ rows are trivial, so the rightmost box of the $(j+1)^{st}$ row must contain a $z$ (even if $j = i-2$). Therefore, we have by the semistandardness of $A_{(i \; i+1)}$ that the leftmost entry $u$ of the $j^{th}$ row is at most $r' = z-1$.

\begin{figure}
\begin{center}
\makebox[\textwidth][c]{\includegraphics[width=1.2\textwidth]{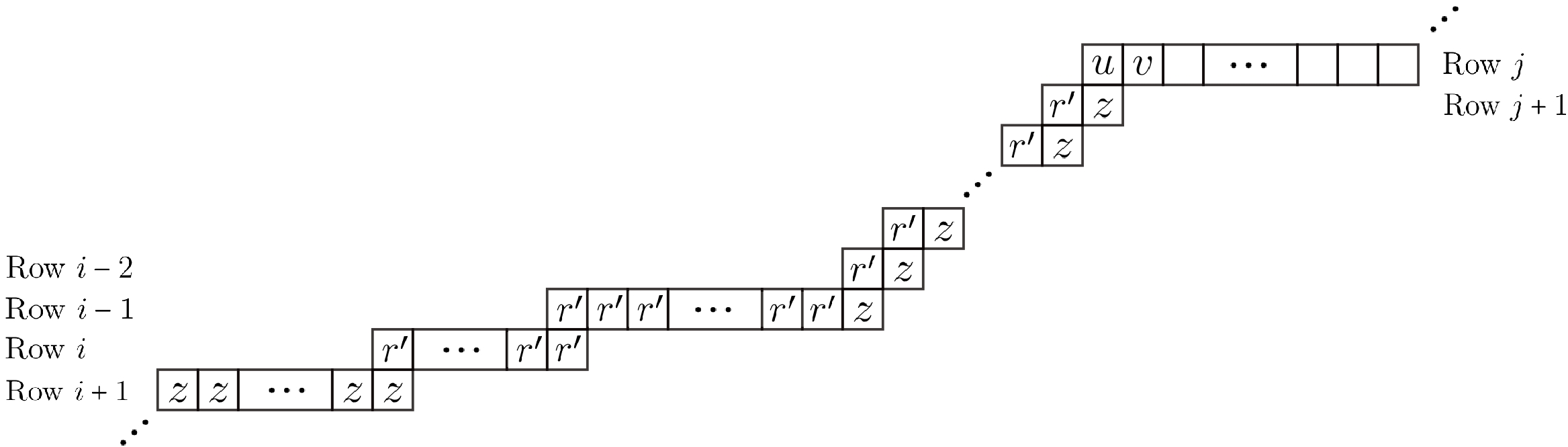}}
\makebox[\textwidth][c]{\includegraphics[width=1.2\textwidth]{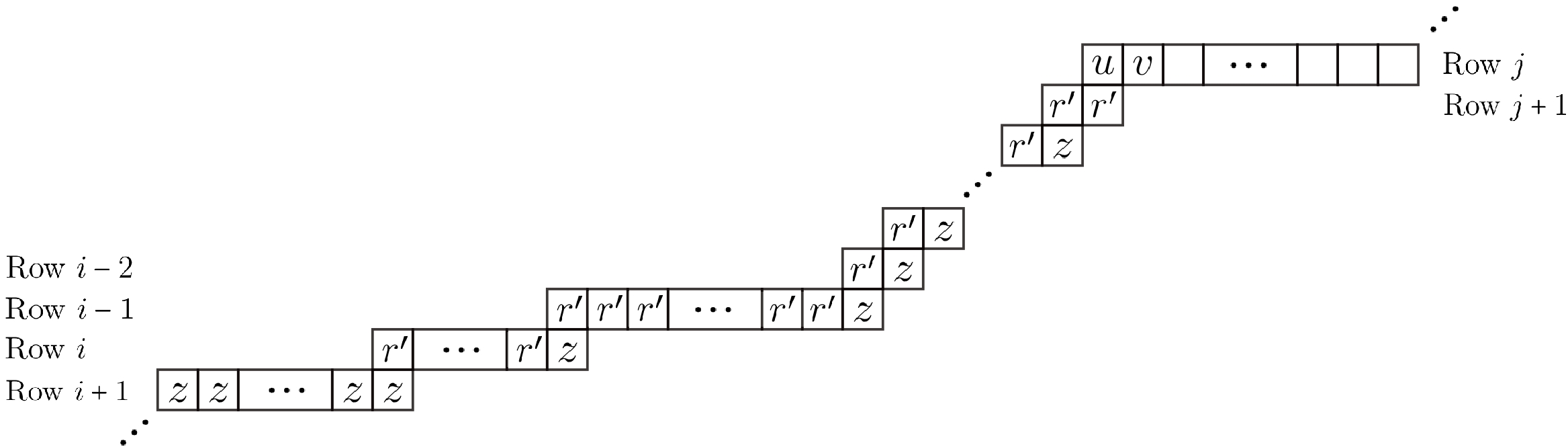}}
\end{center}
\caption{Assumed entries of $A_{(i \; i+1)}$ (top) and $A_{(i \; i+1),z}$ (bottom).}
\label{fig:FinalUpdate2}
\end{figure}

If $u < r'$, then the $z$ in the rightmost box of the $(j+1)^{st}$ row can be swapped with the $r'$ in box $R'$ of $A_{(i \; i+1)}$ to obtain a semistandard tableau $A_{(i \; i+1), z}$ (see Figure \ref{fig:FinalUpdate2}). Moreover, $A_{(i \; i+1), z}$ is Yamanouchi, since to obtain the RRW of $A_{(i \; i+1), z}$ from that of $A_{(i \; i+1)}$, one does not move $r'$ in front of any $(r'-1)$'s, nor does one move $z$ behind any $(z+1)'s$. We can therefore assume that $u = r'$.

Let $v$ denote the entry to the right of $u$ in the $j^{th}$ row of $A_{(i \; i+1)}$ (see Figure \ref{fig:FinalUpdate2}). Recall that $d = 0$ and that all rows strictly between the $j^{th}$ and $(i-1)^{st}$ rows are trivial. As a result, if $v = r'$ as well, then the prefix of the RRW ending immediately before $v$ is not Yamanouchi. We can therefore assume that $v > r'$.

Suppose that $\a_j > 2$. Then $v$ is not the rightmost element of the $j^{th}$ row of $A_{(i \; i+1)}$, so we can swap it with the $r'$ in box $R'$ of $A_{(i \; i+1)}$ to obtain a semistandard tableau $A_{(i \; i+1),v}$. Moreover, $A_{(i \; i+1),v}$ is Yamanouchi, since to obtain its RRW from that of $A_{(i \; i+1)}$, one does not move $v$ behind any instances of $v+1$, nor does one move $r'$ ahead of any instances of $r'-1$. Therefore, let us assume that $\a_j = 2$.

If the leftmost entry in the $(j-1)^{st}$ row of $A_{(i \; i+1),v}$ (call it $q$) is less than $r'$, then $A_{(i \; i+1),v}$ is again an LR-tableau. Therefore, assume that $q \geq r'$, in which case $A_{(i \; i+1), v}$ is not semistandard. If $q > r'$, we can swap $q$ with the $r'$ beneath it to obtain an LR-tableau. We can therefore assume that $q = r'$.

We now return to considering the tableau $A_{(i \; i+1)}$, with the assumptions that $\a_j = 2$ and $q = r'$. Recall again that $d = 0$ and that all rows strictly between the $j^{th}$ and $(i-1)^{st}$ rows of $A_{(i \; i+1)}$ are trivial. If $v \neq z$, then the prefix of the RRW of $A_{(i \; i+1)}$ ending immediately before $q$ is not Yamanouchi. Consequently, we have that $v = z$, contradicting our assumption that the $j^{th}$ row of $A_{(i \; i+1)}$ is nontrivial. This completes the proof.
\end{proof}

Since transpositions generate the symmetric group, Theorem \ref{Tri-Swap-Thing} allows us to prove the following sufficient condition for a ribbon to have full equivalence class. This result generalizes the finding of McNamara and van Willigenburg that all equitable ribbons have full equivalence class \cite[Thm. 1.5]{McWilligenburgSupport}.

\begin{cor} \label{Gen-Tri-Thing}
Let $\a = (\a_1, \a_2, \ldots, \a_m)$ be a ribbon with each $\a_i \geq 2$ and $m \geq 3$. Set $\a_0 = \infty$ for convenience of notation. If all 3-subsets of $\{\a_i\}_{i=1}^m$ satisfy the strict triangle inequality, then $\a$ has full equivalence class.
\end{cor}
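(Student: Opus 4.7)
The plan is to sandwich $[\a_\pi]$ between $[\a^{\uparrow}]$ and $[\a^{\downarrow}]$ for every $\pi\in S_m$, where $\a^{\uparrow}$ (resp.\ $\a^{\downarrow}$) denotes the weakly increasing (resp.\ decreasing) arrangement of the row lengths, and then to close the sandwich via Remark \ref{rmk:antipodal}: since $\a^{\downarrow}=(\a^{\uparrow})^{\circ}$, we have $[\a^{\uparrow}]=[\a^{\downarrow}]$. The strict triangle hypothesis is a property of the multiset $\{\a_i\}$ and so is preserved under every permutation. Moreover, whenever Theorem \ref{Tri-Swap-Thing} would be applied at an index $j$ of some arrangement $\beta=\a_\pi$ with $\beta_j>\beta_{j+1}$, the auxiliary inequality $\beta_j<\beta_{j-1}+\beta_{j+1}$ is automatic: trivially when $j=1$ via $\beta_0=\infty$, and for $j\geq 2$, either $\beta_j$ is the maximum of $\{\beta_{j-1},\beta_j,\beta_{j+1}\}$ and the strict triangle inequality applies, or $\beta_j$ is bounded above by one of $\beta_{j-1},\beta_{j+1}$, which is itself strictly less than $\beta_{j-1}+\beta_{j+1}$ (using that all row lengths are at least $2$). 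Thus Theorem \ref{Tri-Swap-Thing} yields $[\beta]\subseteq[\beta_{(j\;j+1)}]$ whenever $\beta_j>\beta_{j+1}$.

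Given this, the upper bound $[\a_\pi]\subseteq[\a^{\uparrow}]$ follows by bubble sort: transforming $\a_\pi$ into $\a^{\uparrow}$ uses a finite sequence of adjacent transpositions, each swapping a locally larger entry above with a locally smaller one below, and chaining the resulting support containments gives the bound.

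For the matching lower bound $[\a^{\downarrow}]\subseteq[\a_\pi]$, I would prove by induction on $m$ the combinatorial reachability claim that every arrangement of a multiset is reachable from its weakly decreasing arrangement by a sequence of adjacent transpositions, each strictly swapping a larger entry above with a smaller entry below. The base cases $m\leq 2$ are immediate. For the inductive step, let $v=(\a_\pi)_1$ and let $p$ be the position of the leftmost occurrence of $v$ in $\a^{\downarrow}$. Every entry at positions $1,\ldots,p-1$ of $\a^{\downarrow}$ is then strictly greater than $v$, so the swaps at $(p-1,p),(p-2,p-1),\ldots,(1,2)$ are all legal and move $v$ to position $1$, leaving positions $2,\ldots,m$ in weakly decreasing order over $\{\a_i\}\setminus\{v\}$. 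The inductive hypothesis then produces a sequence of swaps internal to positions $2,\ldots,m$ that rearranges them into $((\a_\pi)_2,\ldots,(\a_\pi)_m)$; these swaps remain legal in the full ribbon because each relevant triple of consecutive entries is still a 3-subset of the original multiset.

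Combining, $[\a_\pi]\subseteq[\a^{\uparrow}]=[\a^{\downarrow}]\subseteq[\a_\sigma]$ for all $\pi,\sigma\in S_m$, so $[\a_\pi]$ is independent of $\pi$ and $\a$ has full equivalence class. I expect the main obstacle to be the inductive reachability argument: one must use the leftmost copy of $v$ in $\a^{\downarrow}$ to guarantee strict inequality at each upward-moving swap, and must verify that the inductive rearrangement of the bottom portion never needs to disturb the entry just placed at position $1$. Both points go through cleanly under the hypotheses of the corollary.
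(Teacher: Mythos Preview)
Your proof is correct and rests on the same two ingredients as the paper---Theorem \ref{Tri-Swap-Thing} for one-sided support containments and Remark \ref{rmk:antipodal} for antipodal rotation---but it organizes them differently. You route every $[\a_\pi]$ through the two sorted arrangements: bubble sort gives $[\a_\pi]\subseteq[\a^{\uparrow}]$, your inductive reachability argument gives $[\a^{\downarrow}]\subseteq[\a_\pi]$, and one global application of antipodal rotation closes the loop via $[\a^{\uparrow}]=[\a^{\downarrow}]$. The paper instead applies antipodal rotation \emph{locally}, at each swap: assuming $\a_i>\a_{i+1}$, Theorem \ref{Tri-Swap-Thing} gives $[\a]\subseteq[\a_{(i\;i+1)}]$ directly, and applying the same theorem to the rotated ribbon $\a^{\circ}_{(j-1\;j)}=(\a_{(i\;i+1)})^{\circ}$ (with $j=m+1-i$) yields the reverse containment $[\a_{(i\;i+1)}]\subseteq[\a]$. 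This gives the two-sided equality $[\a]=[\a_{(i\;i+1)}]$ at every adjacent transposition, so full equivalence follows immediately from the fact that adjacent transpositions generate $S_m$---no reachability lemma is needed. Your route is valid but strictly longer; the paper's local use of rotation trades your extra combinatorial induction for a one-line symmetry.
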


\begin{proof}
Let $i \in \{1,2,\ldots, m-1 \}$ be arbitrary. As noted above, it suffices to show that $[\a] = [\a_{(i \; i+1)}]$. If $\a_i = \a_{i+1}$, then $[\a] = [\a_{(i \; i+1)}]$ follows trivially. Thus, by Remark \ref{rmk:antipodal}, we can assume without loss of generality that $\a_i > \a_{i+1}$. By assumption, $\a_i < \a_{i-1} + \a_{i+1}$, so Theorem \ref{Tri-Swap-Thing} implies that $[\a] \subseteq [\a_{(i \; i+1)}]$.

To show containment the other way, we consider the antipodal rotation $\a^{\circ} = (\a^\circ_1, \a^\circ_2, \ldots \a^\circ_m) = (\a_m, \a_{m-1}, \allowbreak \ldots , \a_1)$ of $\a$, with $\a^{\circ}_0 = \infty$ for convenience. Set $j = m+1 - i$, so that $\a^{\circ}_j = \a_i$ and $\a^{\circ}_{j-1} = \a_{i+1}$. Note that $2 \leq j \leq m$ since $1 \leq i \leq m-1$. Since $(\a_{(i, \; i+1)})^\circ = \a^\circ_{(j-1 \; j)}$, we have that $[\a_{(i \; i+1)}] = [\a^\circ_{(j-1\; j)}]$. Hence, it will suffice to show that $[\a^{\circ}_{(j-1 \; j)}] \subseteq [\a^{\circ}]$.

Since $\a_i > \a_{i+1}$, we have that $\a^{\circ}_j > \a^{\circ}_{j-1}$ (i.e. the $(j-1)^{st}$ row of $[\a^{\circ}_{(j-1 \; j)}]$ is longer than the $j^{th}$ row of $[\a^{\circ}_{(j-1 \; j)}]$). Moreover, we have by assumption that $\a^{\circ}_j < \a^{\circ}_{j-1} + \a^{\circ}_{j-2}$ (meaning that the $(j-1)^{st}$ row of $[\a^{\circ}_{(j-1 \; j)}]$ is shorter than the combined lengths of the row immediately above it and the row immediately below it). Thus, we have established that the ribbon $[\a^{\circ}_{(j-1 \; j)}]$ meets the conditions from Theorem \ref{Tri-Swap-Thing} with respect to its $(j-1)^{st}$ and $j^{th}$ rows.

Since swapping $\a^{\circ}_j$ and $\a^{\circ}_{j-1}$ in $\a^{\circ}_{(j-1 \; j)}$ gives us back $\a^{\circ}$, Theorem \ref{Tri-Swap-Thing} implies that $[\a^{\circ}_{(j-1 \; j)}] \subseteq [\a^{\circ}]$, as desired.
\end{proof}

Having proven a sufficient condition for a ribbon to have full equivalence class, we now turn to our separate necessary condition. 

\section{A Necessary Condition} \label{sec:necessary}

In this section, we will prove the following necessary condition for a ribbon to have full equivalence class.

\begin{thm} \label{strong-nec}
Let $\alpha = (\a_1, \a_2, \ldots, \a_m)$ be a ribbon with $\a_1 \geq \a_2 \geq \cdots \geq \a_m$, where each $\a_i \geq 2$, and $m \geq 3$. If $[\alpha] =  [\a_{(j \; j+1)}]$ , then $N_j < \sum_{i=j+1}^m \alpha_i - (m-j-2)$, where 
$$N_j := \max \Bigg\{k : \sum_{i \leq j: \; \alpha_i < k} (k-\alpha_i) \leq m-j-2 \Bigg\}. $$
In particular, if $\a$ has full equivalence class, then,  for all $1 \leq j \leq m-2$, we have that $N_j < \sum_{i=j+1}^m \alpha_i - (m-j-2)$.
\end{thm}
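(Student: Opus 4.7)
The plan is to prove the contrapositive. Assume $N_j \geq S := \sum_{i=j+1}^m \a_i - (m-j-2)$, and exhibit a partition $\nu \in [\a_{(j, j+1)}] \setminus [\a]$; by the Littlewood--Richardson rule this establishes $[\a] \neq [\a_{(j, j+1)}]$, from which the desired inequality follows. In the \emph{generic case} where $\a_{j-1} \geq \a_j + (m-j-2)$ (equivalently $N_j = \a_j + (m-j-2)$), I would take
\[\nu := (\a_1, \a_2, \ldots, \a_{j-1}, N_j, S).\]
The hypothesis $N_j \geq S$ makes $\nu$ weakly decreasing, and the generic identity guarantees $\sum \nu_i = \sum \a_i$.

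To show $\nu \notin [\a]$, I would argue by induction on rows that in any LR-tableau of $\a$ with content $\nu$, row $i$ must be filled entirely with $i$'s for $i = 1, \ldots, j$. (Row 1 is all $1$'s by the Yamanouchi/semistandard argument for ribbons used earlier in the paper; then for $2 \leq i \leq j-1$ the condition $\nu_i = \a_i$ forces row $i$ to consist only of $i$'s, since lower letters are exhausted and Yamanouchi prevents higher letters from appearing first; the same argument applied one more time shows row $j$ is all $j$'s.) Thus row $j$ contributes exactly $\a_j$ of the $N_j = \a_j + (m-j-2)$ required $j$'s, and the remaining $m-j-2$ $j$'s must lie in rows $j+1, \ldots, m$. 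Since $\nu$ has no entries exceeding $j+1$, the constraint $\ell_i < r_{i+1}$ at each junction $i \in \{j, \ldots, m-1\}$ forces $\ell_i = j$, so each of rows $j+1, \ldots, m-1$ contains at least one $j$. This demands at least $m-j-1$ extra $j$'s, contradicting the availability of only $m-j-2$.

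To show $\nu \in [\a_{(j, j+1)}]$, I would construct an explicit LR-tableau: fill rows $1, \ldots, j-1$ with their row index; row $j$ (now of length $\a_{j+1}$) entirely with $j$'s; row $j+1$ (length $\a_j$) as $\a_j - \a_{j+1}$ $j$'s followed by $\a_{j+1}$ $(j+1)$'s; rows $j+2, \ldots, m-1$ as a single $j$ followed by $\a_i - 1$ $(j+1)$'s; and row $m$ entirely with $(j+1)$'s. A direct count confirms the content is $\nu$, semistandardness is immediate from the block structure, and the Yamanouchi check at each prefix reduces to the two inequalities $\a_{j-1} \geq N_j$ (the generic assumption) and $\a_j \geq \a_{j+1} + \a_{j+2}$, the latter following from $N_j \geq S$ together with $\a_i \geq 2$ for each $i$ (which yields $\sum_{i=j+3}^m \a_i \geq 2(m-j-2)$).

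The main obstacle I anticipate is the \emph{non-generic case}, in which $\a_i < \a_j + (m-j-2)$ for some $i < j$, so $N_j < \a_j + (m-j-2)$. Here the partition $\nu$ above has too few cells, by exactly the deficit $\a_j + (m-j-2) - N_j$, so one must adjust by appending additional small parts (or redistributing mass among $\nu_1, \ldots, \nu_{j-1}$) while preserving the two key properties: that the inductive ``row $i$ is all $i$'s for $i \leq j$'' argument still applies to $\a$, and that an analogous block-filled LR-tableau of $\a_{(j, j+1)}$ realizes $\nu$. The obstructive core (rows $j+1, \ldots, m-1$ each forced to contain a $j$) should persist, but both the construction in $\a_{(j, j+1)}$ and the prefix-by-prefix Yamanouchi verification become noticeably more delicate.
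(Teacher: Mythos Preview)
Your overall strategy matches the paper's exactly: prove the contrapositive by exhibiting a content $\mu$ that admits an LR-filling of $\a_{(j\;j+1)}$ but not of $\a$. Your generic-case construction and verification are correct and coincide with the paper's tableau in that special case.

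The gap you flag is real, and the paper closes it with two ideas that simplify matters considerably. First, for showing $\mu \notin [\a]$, the paper counts $(j{+}1)$'s rather than $j$'s: in \emph{any} ribbon LR-tableau whose content uses only the letters $1,\ldots,j{+}1$, the letter $j{+}1$ cannot appear above row $j{+}1$, and the leftmost box of each of rows $j{+}1,\ldots,m{-}1$ must be $\le j$ (else the box below it would exceed $j{+}1$). Hence an LR-filling of $\a$ admits at most $\sum_{i=j+1}^m \a_i - (m-j-1)$ copies of $j{+}1$, which is one fewer than $\mu_{j+1}=S$. This argument needs nothing about $\mu_1,\ldots,\mu_{j-1}$, so it works uniformly and avoids your inductive ``row $i$ is all $i$'s'' step (which does break down once $\mu_i>\a_i$ for some $i<j$).

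Second, for the construction on $\a_{(j\;j+1)}$ in the non-generic case, the paper does exactly the redistribution you anticipate: fill rows $1,\ldots,j$ with their index, put $\a_j-\a_{j+1}$ copies of $j$ then $\a_{j+1}$ copies of $j{+}1$ in row $j{+}1$, fill rows $j{+}2,\ldots,m{-}1$ with $j{+}1$'s except for the leftmost ``critical'' box, and fill row $m$ with $j{+}1$'s. The critical boxes are then filled top-to-bottom greedily with the largest letter $\le j$ that keeps the prefix Yamanouchi. A short round-counting argument shows this greedy rule places exactly $N_j-\a_j$ copies of $j$ among the $m-j-2$ critical boxes (this is precisely where the definition of $N_j$ enters), so $\mu_j=N_j$; the remaining critical boxes absorb the deficit into $\mu_{j-1},\mu_{j-2},\ldots$ as needed. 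The Yamanouchi check for $j$ versus $j{+}1$ then reduces to the single inequality $\mu_{j+1}\le \mu_j$, i.e.\ $S\le N_j$, which is the contrapositive hypothesis. So your instinct to redistribute mass among $\nu_1,\ldots,\nu_{j-1}$ is right; the greedy critical-box rule is the concrete mechanism, and switching to the $(j{+}1)$-count for the non-existence half removes the delicacy you were worried about.
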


This condition is certainly less penetrable than our sufficient condition, so before delving into the proof, we will illustrate the necessary condition with an example and a non-example. Additionally, the proof we will give is a constructive one, which we hope will make the above definition of $N_j$ more transparent.

\begin{example} \label{ex: necessary condition}
Let $\a = (10,8,6,5,4)$ be a ribbon. Then $\a$ does not satisfy our sufficient condition of Corollary \ref{suff-real} (as $10 \geq 5+4$). However, we now show that it does satisfy the necessary condition of Theorem \ref{strong-nec}. Since $m=5$ in this example, checking the necessary condition amounts to checking the inequalities corresponding to $j=1,2,3$:
\begin{itemize}
\item We have 
$$N_1 = \max \left \{k \; : \sum_{i \leq 1: \; \alpha_i < k} (k-\alpha_i) \leq 2 \right \} = \max \left \{k \; : (k-10) \leq 2 \right \} = 12.$$
Since $\displaystyle 12< \sum_{i=2}^5 \a_i - 2 = 21$, the necessary inequality holds for $j=1$.
\item We have 
$$N_2 = \max \left \{k \; : \sum_{i \leq 2: \; \alpha_i < k} (k-\alpha_i) \leq 1 \right \} = 9,$$
since $9-8 \leq 1$, but $10-8 > 1$. Additionally, $\displaystyle 9 < \sum_{i=3}^5 \a_i - 1 = 14$, the inequality corresponding to $j=2$ holds.
\item We have
$$N_3 = \max \left \{k \; : \sum_{i \leq 3: \; \alpha_i < k} (k-\alpha_i) \leq 0 \right \} =6,$$
since the summation when $k=6$ is the empty sum (and hence is zero), whereas $\displaystyle \sum_{i \leq 3: \; \alpha_i < 7} (7-\alpha_i) = 7-6 = 1 > 0$. Since $\displaystyle 6 < \sum_{i=4}^5 \a_i -0 = 9$, the inequality corresponding to $j=3$ holds.
\end{itemize}
\end{example}

\begin{non-example} \label{non-ex: necessary condition}
Let $\a = (13, 10, 5, 4, 3)$. Then 
$$N_2 = \max \left \{k \; : \sum_{i \leq 2: \; \alpha_i < k} (k-\alpha_i) \leq 1 \right \} = 11.$$ 
Since $\displaystyle 11 \geq \sum_{i=3}^5 \a_i - 1 = 11$, this shows that the necessary inequality does not hold for $j=2$. As a result, Theorem \ref{strong-nec} tells us that $\a$ does not have full equivalence class. 
\end{non-example}

\begin{remark} \label{rem:nec}
Using the notation from Theorem \ref{strong-nec}, we will always have $\alpha_j \leq N_j \leq \alpha_j + m - j - 2$. In particular, $N_j = \alpha_j$ whenever $j = m-2$, while $N_j = \alpha_j + m - j - 2$ if and only if $\alpha_j \leq \alpha_{j-1} - (m-j-2)$.
\end{remark}
\begin{remark} \label{rem:alg}
In general, to efficiently determine $\displaystyle N_j = \max \left \{k : \sum_{i \leq j: \; \alpha_i < k} (k-\alpha_i) \leq m-j-2 \right \}$, one can use the following algorithm:
\begin{enumerate}
\item Start with $k=\a_j$.
\item Set $s$ = 0. For each $\a_i \in \{\a_1, \a_2, ..., \a_j\}$ with $k > \a_i$, add $(k-\a_i)$ to $s$.
\item If $s > m-j-2$, then $N_j = k-1$. Otherwise, put $k=k+1$ and go back to step 2.
\end{enumerate}
\end{remark}

\begin{remark}
A much weaker but simpler version of our necessary condition is that $\alpha_i < \sum_{k=i+1}^m \alpha_k$ for all $1 \leq i \leq m-2$.
\end{remark}

Having gained some familiarity with the necessary condition, we now turn towards its proof, which we begin with a lemma.

\begin{lem} \label{j-longer-than-j+1}
Let $\alpha = (\a_1, \a_2, \ldots, \a_m)$ be a ribbon with $\a_1 \geq \a_2 \geq \cdots \geq \a_m$, where each $\a_i \geq 2$, and $m \geq 3$. If $\displaystyle N_j \geq \sum_{i=j+1}^m \alpha_i - (m-j-2)$ for some $j \in \{1,2,\ldots,m-2 \}$, then $\a_j > \a_{j+1}$.
\end{lem}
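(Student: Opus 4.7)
The plan is to argue by contrapositive (equivalently, by contradiction assuming $\alpha_j = \alpha_{j+1}$, which is the only alternative to $\alpha_j > \alpha_{j+1}$ given the weakly decreasing ordering) and combine two pieces: the universal upper bound $N_j \leq \alpha_j + m-j-2$ from Remark \ref{rem:nec}, and the trivial lower bound $\alpha_i \geq 2$ on the tail row lengths.

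First I would verify (or simply invoke) the upper bound $N_j \leq \alpha_j + m-j-2$ from Remark \ref{rem:nec}: if one plugs $k = \alpha_j + m - j - 1$ into the defining sum, then the single term coming from $i = j$ already contributes $k - \alpha_j = m-j-1 > m-j-2$, so such a $k$ violates the condition, forcing $N_j \leq \alpha_j + m-j-2$. Combining this with the hypothesis gives
\[
\alpha_j + (m-j-2) \;\geq\; N_j \;\geq\; \sum_{i=j+1}^m \alpha_i - (m-j-2),
\]
which rearranges to
\[
\alpha_j \;\geq\; \sum_{i=j+1}^m \alpha_i - 2(m-j-2).
\]

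Next, suppose toward a contradiction that $\alpha_j = \alpha_{j+1}$. Since $\alpha_i \geq 2$ for all $i$, and since there are exactly $m-j-1$ indices in the range $i = j+2, \ldots, m$, we get
\[
\sum_{i=j+1}^m \alpha_i \;=\; \alpha_{j+1} + \sum_{i=j+2}^m \alpha_i \;\geq\; \alpha_j + 2(m-j-1).
\]
Substituting into the displayed inequality above yields
\[
\alpha_j \;\geq\; \alpha_j + 2(m-j-1) - 2(m-j-2) \;=\; \alpha_j + 2,
\]
which is absurd. Hence $\alpha_j \neq \alpha_{j+1}$, and together with $\alpha_j \geq \alpha_{j+1}$ this forces $\alpha_j > \alpha_{j+1}$.

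There is no serious obstacle here; the argument is a short arithmetic chase. The only subtlety worth double-checking is the validity of Remark \ref{rem:nec}'s upper bound on $N_j$, which comes almost immediately from the definition once one notes that $j \leq m-2$ guarantees $m-j-1 \geq 1$, so that $k = \alpha_j + m - j - 1$ is strictly larger than $\alpha_j$ and the $i=j$ term alone overshoots the budget $m-j-2$. Everything else is bookkeeping with the bound $\alpha_i \geq 2$.
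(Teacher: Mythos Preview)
Your proof is correct and follows essentially the same route as the paper: both combine the upper bound $N_j \leq \alpha_j + (m-j-2)$ from Remark~\ref{rem:nec} with the bound $\alpha_i \geq 2$ on the tail rows to squeeze out the desired strict inequality. The paper's version is arranged as a direct chain $\alpha_{j+1} \leq N_j + (m-j-2) - \sum_{i=j+2}^m \alpha_i \leq \alpha_j + 2(m-j-2) - \sum_{i=j+2}^m \alpha_i < \alpha_j$ rather than a contradiction from $\alpha_j = \alpha_{j+1}$, but the arithmetic content is identical.
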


\begin{proof}
The inequality  $\displaystyle N_j \geq \sum_{i=j+1}^m \alpha_i - (m-j-2)$ implies that
$$
\a_{j+1}  \leq N_j + (m-j-2) - \sum_{i=j+2}^{m} \a_i 
 \leq \a_j + 2(m-j-2) - \sum_{i=j+2}^{m} \a_i 
< \a_j,
$$
where the second inequality comes from the upper bound on $N_j$ given in Remark \ref{rem:nec}, and the third inequality follows from the assumption that all rows are at least two boxes long.
\end{proof}

\begin{proof}[Proof of Theorem~\ref{strong-nec}]
We prove the contrapositive. Fix $j \in \{1,2,\ldots , m-2\}$ such that $\displaystyle N_j \geq \sum_{i=j+1}^m \alpha_i - (m-j-2)$. We will exhibit a ribbon LR-tableau of shape $\alpha_{(j \; j+1)}$, whose content we will call $\mu$; then we will prove that there is no ribbon LR-tableau of shape $\a$ and content $\mu$.

Fill $\a_{(j \; j+1)}$ as follows (we'll call the resulting tableau $A$). Fill the $i^{th}$ row entirely with $i$'s for $i \leq j$. Put $\alpha_{j+1}$ $(j+1)$'s in the rightmost boxes of the $(j+1)^{st}$ row and fill the remaining boxes in this row with $j$'s. Note that by Lemma \ref{j-longer-than-j+1}, the leftmost entry of the $(j+1)^{st}$ row in this filling is a $j$ (meaning this row is longer than $\alpha_{j+1}$ in length).

We now fill the remaining $m-j-1$ rows with as many $(j+1)$'s as possible (while maintaining semistandardness, but perhaps not the Yamanouchi property, although we will show with the upcoming arguments that it indeed is Yamanouchi); put $(j+1)$'s in all but the leftmost box of the next $m-j-2$ rows, as well as in every box in the last row. Now the only empty boxes are the leftmost boxes in rows $j+2, j+3 \ldots, m-2, m-1$. We will call these remaining boxes \textit{critical boxes}. Fill the critical boxes from top to bottom according to the following algorithm: in each box, put the largest integer $\leq j$ such that the prefix of RRW through that box remains Yamanouchi. In practice, this means we will use exclusively $j$'s until the number of $j$'s in the tableau equals the number of $(j-1)$'s. Then, we will alternate between $(j-1)$'s and $j$'s until the both the number of $j$'s and of $(j-1)$'s equals the number of $(j-2)$'s. At this point, we rotate between placing $j$'s, $(j-1)$'s, and $(j-2)$'s until the number of each of these equals the number of $(j-3)$'s. We continue in this manner until all boxes have been filled. Towards proving that the resulting tableau $A$ is Yamanouchi, we first show that it contains exactly $N_j$ $j$'s. 

First we define a \textit{round}. Let the variables $c_1, c_2, \ldots, \allowbreak c_{m-j-2}$ represent the entries in the critical boxes, from top to bottom. Let $J = \{c_s: c_s = j\}$. Now partition $\{c_1 , \dots , c_{m-j-2}\}$ into \textit{rounds}, where each round is a consecutive subsequence of $c_1,\ldots, c_{m-j-2}$ whose last element is in $J$ but with no other elements in $J$ (i.e. a round ends if and only if an element equal to $j$ is encountered). \\

\noindent \emph{Claim}: If $r$ rounds can be completed before reaching the bottom, then at the end of the $r^{th}$ round, we have filled exactly $\displaystyle \sum_{i \leq j: \; \alpha_i < \a_j + r} (\a_j + r-\alpha_i)$ critical boxes. In particular, after the $r^{th}$ round, each number $i \leq j$ such that $\a_i \leq \a_j + r$ has occurred in exactly $\a_j + r - \a_i$ critical boxes.

\begin{proof} [Proof of Claim]
We will use induction on $r$. The claim trivially holds when $r=0$. Now consider an arbitrary $r>0$ (such that $r$ rounds can be completed before reaching the bottom) and assume the claim holds for $r-1$. In the $r^{th}$ round, we will write every number that was used in the $(r-1)^{st}$ round one more time, as well as any number $\ell$ satisfying $\a_\ell = \a_j + r-1$. Therefore, the latter numbers $\ell$ will each fill exactly one critical box after $r$ rounds, as is appropriate since, by choice of $\ell$, $\a_j + r - \a_\ell = 1$. All numbers which appeared in the $(r-1)^{st}$ round have now occurred in a critical box one more time than before. For a fixed number $i$, by the induction hypothesis, this is $\a_j + (r-1) - \a_i + 1 = \a_j + r - \a_i$ times. This completes the proof of the claim.
\end{proof}

Clearly the number of $j$'s in $A$ is $\a_j$ plus the number of rounds executed before running out of critical boxes. That is, if $\mu_j$ is the number of $j$'s in $A$, 
\begin{align*}
\mu_j &= \a_j + \max \Bigg \{r \; : \sum_{i\leq j: \a_i < \a_j + r} \a_j + r - \a_i \leq m-j-2 \Bigg \}\\
&= \max \Bigg\{k: \sum_{i \leq j: \a_i < k} (k-\a_i) \leq m-j-2 \Bigg \} = N_j.
\end{align*}
In particular, $\mu_j = N_j$.

By construction, the number of $(i+1)$'s never surpasses the number of $i$'s in the RRW of $A$, since $1 \leq i \leq j-1$. Semistandardness is also clear by construction, so all that is left to check is that the number of $(j+1)$'s never overtakes the number of $j$'s in the RRW of $A$. 

It is clear that the number of $(j+1)$'s does not surpass the number of $j$'s in the first $j+1$ rows of the tableau. Since each of the remaining rows has at least as many $(j+1)$'s as $j$'s and the last row consists entirely of $(j+1)$'s, the number of $(j+1)$'s can only overtake the number of $j$'s if the total number of $(j+1)$'s in $A$ is greater than the total number of $j$'s in $A$ (that is, if $\mu_{j+1} > \mu_j$). Therefore, it suffices to show that $\mu_j = N_j \geq \mu_{j+1}$. Indeed, observe that $\mu_{j+1} = \sum_{i=j+1}^m \alpha_i - (m-j-2)$, meaning this inequality follows immediately from our assumption that $N_j \geq \sum_{i=j+1}^m \alpha_i - (m-j-2)$. This completes the argument that $A$ (which has shape $\a_{(j \; j+1)}$) is an LR-tableau.

We now show that there is no ribbon LR-tableau of shape $\alpha$ and content $\mu$. By the Yamanouchi property and semistandardness, in any LR-tableau, there cannot be any $(j+1)$'s above the $(j+1)^{st}$ row. It follows that the maximum number of $(j+1)$'s that an LR-tableau of shape $\a$ could have is $\sum_{i=j+1}^m \alpha_i - (m-j-1) < \mu_{j+1}$. Therefore, there is no LR-tableau of shape $\alpha$ and content $\mu$. The LR-rule now tells us that $\mu$ is in the support of $\a_{(j ~ j+1)}$, but is not in the support of $\a$.
\end{proof}

We have proven that the condition in Theorem \ref{strong-nec} is necessary for a ribbon to have full equivalence class. In fact, we have proven that this condition is both necessary and sufficient for ribbons with three or four rows to have full equivalence class \cite{Us}. (When $m=3$, our necessary and sufficient conditions from coincide, while the $m=4$ case requires additional analysis/case-work.) In addition, we have verified by computation that this condition is sufficient for $m=5$, $m=6$, and $m=7$ for certain $n$ (where $n$ is the number of boxes in the diagram). As a result, we conjecture the following:

\begin{conj} \label{strong-nec-conj}
Let $\alpha = (\a_1, \a_2, \ldots, \a_m)$ be a ribbon with $\a_1 \geq \a_2 \geq \cdots \geq \a_m$, where each $\a_i \geq 2$, and $m \geq 3$. Then, $\alpha$ has full equivalence class if and only if $N_j < \sum_{i=j+1}^m \alpha_i - (m-j-2)$ for all $1 \leq j \leq m-2$.
\end{conj}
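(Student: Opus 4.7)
The plan is to prove the nontrivial implication of Conjecture \ref{strong-nec-conj} (that the $N_j$-inequalities are sufficient) by induction on the number of rows $m$. The base cases $m=3$, where the $N_j$-condition coincides with the strict triangle inequality and the result follows from Corollary \ref{suff-real}, and $m=4$ are already known, as noted immediately before the statement. Throughout, I fix $\alpha=(\alpha_1,\dots,\alpha_m)$ with weakly decreasing row lengths satisfying $N_j < \sum_{i=j+1}^m \alpha_i - (m-j-2)$ for every $1\leq j \leq m-2$.

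The first reduction is to adjacent swaps. By Remark \ref{rmk:antipodal} and since adjacent transpositions generate $S_m$, it suffices to show $[\alpha]=[\alpha_{(i~i+1)}]$ for each $i$, and by the antipodal-rotation argument used at the end of the proof of Corollary \ref{Gen-Tri-Thing}, it suffices to establish the single inclusion $[\alpha] \subseteq [\alpha_{(i~i+1)}]$ whenever $\alpha_i>\alpha_{i+1}$. When additionally $\alpha_i < \alpha_{i-1}+\alpha_{i+1}$, Theorem \ref{Tri-Swap-Thing} finishes the job, so the only genuinely new regime is $\alpha_i \geq \alpha_{i-1}+\alpha_{i+1}$; this is where the triangle-inequality argument breaks down.

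In the hard regime, given an LR-tableau $A$ of shape $\alpha$ and content $\mu$, I would still begin by applying the $R$-matrix algorithm to rows $i$ and $i+1$ to obtain a Yamanouchi tableau $A_{(i~i+1)}$ by Lemma \ref{R-Yamanouchi}, whose only possible failure of semistandardness is at the interface with row $i-1$. The pivotal equality $d=0$ in the proof of Theorem \ref{Tri-Swap-Thing} was extracted from the strict triangle inequality; without it, I propose to extract the analogous cancellation globally from the $N_j$-hypothesis. Concretely, the defining inequality for $N_{i-1}$ bounds how many $(i-1)$-entries any LR-filling of rows $1,\dots,i-1$ can accommodate, and this bound should supply exactly enough room to absorb the obstructing entries via a cascade of local swaps generalizing those performed in Figures \ref{fig:FinalTab}--\ref{fig:FinalUpdate2}. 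Terminating the cascade with a valid LR-tableau of shape $\alpha_{(i~i+1)}$ would then invoke the inductive hypothesis on the sub-ribbon formed by rows $1,\dots,i-1$, contingent on that sub-ribbon inheriting the correct $N_j$-inequalities.

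The main obstacle is precisely that inheritance step. Because $N_j$ is defined nonlocally in terms of all of $\alpha_1,\dots,\alpha_j$ and of $m-j-2$, deleting rows or truncating to a neighborhood of row $i$ does not manifestly preserve the defining inequalities. My first concrete task would be to prove a monotonicity result for $N_j$ under row deletion, using the greedy description in Remark \ref{rem:alg}, of the form: if $\alpha$ satisfies the $N_j$-condition and $k$ is appropriately chosen, so does $(\alpha_1,\dots,\widehat{\alpha_k},\dots,\alpha_m)$. A secondary obstacle is that for very unbalanced ribbons, a single adjacent swap in the hard regime can force the $R$-matrix output to violate semistandardness several rows upward, and the repair cascade must interact consistently with the $N_j$-hypothesis for multiple values of $j$ at once; this coupling is likely what restricts the authors' computational verification to $m\leq 7$ and should dictate the ultimate shape of the proof.
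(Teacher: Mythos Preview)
The paper does not prove Conjecture~\ref{strong-nec-conj}; it is presented as an open conjecture, with only the necessity direction established (Theorem~\ref{strong-nec}), the cases $m=3,4$ handled in a companion note, and computer verification for some $m\le 7$. There is thus no proof in the paper to compare against, and your proposal is, by your own description, a plan with two unresolved obstacles (inheritance of the $N_j$-inequalities under row deletion, and control of a multi-row repair cascade) rather than a proof.

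Beyond the acknowledged gaps, your reduction step is flawed. You fix $\alpha$ to be weakly decreasing and then declare the ``hard regime'' to be $\alpha_i \ge \alpha_{i-1}+\alpha_{i+1}$; but for weakly decreasing $\alpha$ one has $\alpha_{i-1}\ge \alpha_i$ and $\alpha_{i+1}\ge 2$, so $\alpha_{i-1}+\alpha_{i+1}>\alpha_i$ automatically and this regime is empty. In fact Theorem~\ref{Tri-Swap-Thing} already gives $[\alpha]\subseteq[\alpha_{(i\;i+1)}]$ for every $i$ when $\alpha$ is sorted, with no hypothesis on $N_j$ at all. The genuine difficulty is elsewhere: adjacent transpositions generate $S_m$ only if you may apply them repeatedly, so you must prove $[\beta]=[\beta_{(i\;i+1)}]$ for \emph{every} permutation $\beta$ of $\alpha$, not just the sorted one. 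For such $\beta$ the rows $\beta_{i-1},\beta_i,\beta_{i+1}$ are in no particular order, and the hypothesis of Theorem~\ref{Tri-Swap-Thing} can fail. Equivalently, the antipodal-rotation trick you invoke converts the reverse inclusion for sorted $\alpha$ into a forward inclusion for a ribbon that is no longer sorted, where the relevant triangle condition becomes $\alpha_i<\alpha_{i+1}+\alpha_{i+2}$ rather than $\alpha_i<\alpha_{i-1}+\alpha_{i+1}$. Your inductive scheme (delete a row, hope the $N_j$-condition persists, recurse) does not engage with this, and the needed monotonicity of $N_j$ under row deletion is neither proved nor evidently true.
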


\begin{remark}
Note that the condition conjectured in Conjecture \ref{strong-nec-conj} to be sufficient for a ribbon to have full equivalence class would subsume the sufficient condition proved in Theorem \ref{suff-real}.
\end{remark}

\section{Concluding Remarks and Future Work}

In this paper, we have presented substantial progress towards classifying when a permutation $\pi \in S_m$ of row lengths of a ribbon $\a$ produces a ribbon $\a_{\pi}$ with the same Schur support as $\a$. However, there are several ways in which we would like to generalize our results so as to obtain a more complete answer to the following central question:
\begin{problem} \label{problem:main}
Given two skew shapes $\lambda_1/\mu_1$ and $\lambda_2/\mu_2$, when is it the case that $[\lambda_1/\mu_1] = [\lambda_2/\mu_2]$?
\end{problem}

To fully understand when ribbons have equal Schur supports, we would first like to prove Conjecture \ref{strong-nec-conj}, which would completely classify when a ribbon has full equivalence class. We would then like to investigate support equalities among ribbons which do not have full equivalence class. Namely, we pose the following open question:
\begin{problem}
Given a  ribbon $\a$, for which $\pi \in S_m$ do we have $[\a] = [\a_{\pi}]$?
\end{problem}
\noindent Theorem \ref{Tri-Swap-Thing} offers partial progress towards answering this question, giving a sufficient condition for the support containment $[\a] \subseteq [\a_{(i \; i+1)}]$ for any $1 \leq i \leq (m-1)$.

Although Problem \ref{problem:main} has proven to be difficult in general, one potentially feasible step forward would be to develop analogues of our main results for skew shapes other than connected ribbons. Namely, we would like to answer the following question:
\begin{problem}
Given a skew shape $\lambda/\mu$, when does $\lambda/\mu$ share a support with every skew shape formed by permuting its row lengths so as to preserve column overlaps (i.e. when does $\lambda/\mu$ have full equivalence class)?
\end{problem}
\noindent Note, however, that a skew shape which is not a ribbon can share Schur support with a skew shape that is not formed by permuting its row lengths. Nonetheless, extending the methods of this paper to general skew shapes has the potential to be illuminating.  

Overall, there are many remaining open questions regarding Schur support equalities. Answering these questions has the potential to better our understanding of the relationship between skew Schur functions and straight Schur functions, two of the most important bases of the symmetric functions.

\section*{Acknowledgements}

This research was performed as part of the 2017 University of Minnesota, Twin Cities Combinatorics REU, and was supported by NSF RTG grant DMS-1148634 and by NSF grant DMS-1351590. We would like to thank Victor Reiner, Pavlo Pylyavskyy, and Galen Dorpalen-Barry for their advice, mentorship, and support. We are also grateful to Sunita Chepuri for her help in finding a reference on $R$-matrices.

\end{document}